\documentclass[11pt,a4paper]{article}
\usepackage[margin=1.1cm]{geometry}
\usepackage{amsmath,amsthm,amsfonts,amssymb,amscd,cite,graphicx}
\usepackage{latexsym}
\usepackage{enumitem}

\usepackage[usenames, dvipsnames]{color}
\definecolor{mygray}{gray}{0.6}

\usepackage{titlesec}
\titleformat{\section}
{\normalfont\fontsize{12}{15}\bfseries}{\thesection}{1em.}{}

\newtheorem{proposition}{Proposition}[section]

\newtheorem{corollary}{Corollary}[section]

\newtheorem{theorem}{Theorem}[section]

\allowdisplaybreaks[4]

\let\oldbibliography\thebibliography
\renewcommand{\thebibliography}[1]{%
  \oldbibliography{#1}%
  \setlength{\itemsep}{-2pt}%
}

\begin{document}

\baselineskip=0.20in

\makebox[\textwidth]{%
\hglue-15pt
\begin{minipage}{0.6cm}	
\vskip9pt
\end{minipage} \vspace{-\parskip}
\hfill
\begin{minipage}[t]{6.5cm}
\normalsize {\it }  {\bf}
\end{minipage}}
\vskip36pt

\noindent
{\large \bf Distribution of the inversion statistic on run-sorted permutations}\\

\noindent Toufik Mansour\\

\noindent
\footnotesize{\it Department of Mathematics, University of Haifa, 3498838 Haifa, Israel\\
Email: tmansour@univ.haifa.ac.il}\\

\normalsize\noindent
\noindent Olivia Nabawanda\\

\noindent
\footnotesize {\it Department of Mathematics, Makerere University, Kampala, Uganda\\
Email: nabawandaolivia100@gmail.com}\\

\normalsize\noindent
Mark Shattuck\\

\noindent
\footnotesize {\it Department of Mathematics, University of Tennessee,
37996 Knoxville, TN\\
Email: mark.shattuck2@gmail.com}\\

\setcounter{page}{1} \thispagestyle{empty}

\baselineskip=0.20in

\normalsize

\begin{abstract}
Let $\pi=\pi_1\cdots \pi_n$ be a permutation.  We say that $\pi$ is \emph{run-sorted} if $\pi_1=1$ and the entries immediately following the descent positions of $\pi$ form an increasing sequence.  Let $\mathcal{R}_n$ denote the set of run-sorted permutations of length $n$, which has cardinality given by the Bell number $B_{n-1}$ for all $n \geq 1$.  In this paper, we consider the joint distribution $A_n(q,u)$ on $\mathcal{R}_n$ for the parameters tracking the numbers of inversions and runs leading to a new polynomial generalization of the Bell numbers.  Among our results, we find a general recurrence for $A_n(q,u)$, from which one may derive explicit formulas for the total numbers of inversions or runs in all the members of $\mathcal{R}_n$ as well as for the sign-balance on $\mathcal{R}_n$ of either parameter.  A simple expression for the Eulerian generating function for $A_n(q,1)$ may be found upon making use of Gessel's $q$-exponential formula which can be extended to general $u$.  Finally, a formula is found by a direct argument for the maximum number of inversions within a member of $\mathcal{R}_n$. \medskip

\end{abstract}

{\bf Keywords:} run-sorted permutation, flattened partition, inversion, $q$-analogue, combinatorial statistic. \medskip

{\bf 2020 Mathematics Subject Classification:} 05A05, 05A15.

\section{Introduction}

Let $\mathcal{S}_n$ denote the set of permutations of $[n]=\{1,\ldots,n\}$.  Given $\pi=\pi_1\cdots \pi_n \in \mathcal{S}_n$, expressed in the one-line notation, an \emph{inversion} of $\pi$ refers to an ordered pair $(i,j)$ such that $1 \leq i < j \leq n$ and $\pi_i>\pi_j$.  The total number of inversions of $\pi$ is denoted by $\text{inv}(\pi)$.   Let $\text{run}(\pi)$ denote the number of runs of $\pi$, where a \emph{run} refers to a maximal increasing sequence of consecutive entries.  For example, if $\pi=256143 \in \mathcal{S}_6$, then we have $\text{inv}(\pi)=8$, the inversions of $\pi$ corresponding to the following pairs of entries:
$$\{(2,1),\,(4,3),\,(5,1),\,(5,3),\,(5,4),\,(6,1),\,(6,3),(6,4)\}.$$
The runs of $\pi$ are given by 256, 14 and 3, and hence $\text{run}(\pi)=3$.

The number of inversions in a permutation is a measure of the extent to which a permutation deviates from the natural order.  The fact that the distribution of inv on $\mathcal{S}_n$ is the $q$-analogue of $n!$ dates from at least an 1898 paper of Muir \cite{Muir} and it was shown thereafter that inv has the same distribution on $\mathcal{S}_n$ as the major index (see MacMahon \cite{Mac}).  Different aspects of the inv distribution on $S_n$ have been subsequently studied including the asymptotic normality (see, for example, \cite{Mar} and references contained therein).  For comparable work concerning the parameters on $\mathcal{S}_n$ tracking the number of runs or alternating runs, see, e.g., \cite{SMa1,SMa2,Zh}.  Recently, an analogue of inversions on colored permutations \cite{ARV} was introduced, along with its restriction to derangements and involutions.  Here, we consider a restriction of inv to a certain subset of $\mathcal{S}_n$ related to set partitions.

A \emph{partition} of a finite $S$  is a collection of nonempty, pairwise disjoint subsets, called \emph{blocks}, whose union is $S$.  Let $\mathcal{P}_n$ denote the set of partitions of $[n]$ and recall $\mathcal{P}_n$ has cardinality given by the $n$-th Bell number $B_n$ for all $n\geq 0$, see A000110 in the OEIS \cite{Sl}.  Given $\pi \in \mathcal{P}_n$, the \emph{flattening} of $\pi$, denoted by $\text{flat}(\pi)$, is obtained by writing the elements of each block in increasing order, arranging the blocks from left to right in ascending order of their first (= smallest) elements and then considering the permutation that results by removing the parentheses enclosing each block. We refer to a permutation obtained in this manner as a \emph{flattened partition}.  The study of flattened partitions was initiated by Callan \cite{Callan}, who considered the avoidance of classical patterns of length three.  Many further results concerning avoidance by flattened partitions or related statistics were obtained soon thereafter and the notion of flattening has recently been extended to other discrete structures, such as Catalan words \cite{BHR}, Stirling permutations \cite{BEF} and parking functions \cite{EHM}.

Note that the flattening operation is not injective, and thus it may be interesting to consider the set of distinct permutations of $[n]$ that arise from $\text{flat}(\pi)$ as $\pi$ ranges over all members of $\mathcal{P}_n$.  Let $\mathcal{R}_n$ denote this set of permutations of $[n]$.  For example, if $n=4$, then $\mathcal{R}_4=\{1234, 1243, 1324, 1342, 1423\}$.  Note that members of $\mathcal{R}_n$ are characterized by the property that the elements starting runs form an increasing subsequence, which follows from the ordering applied to the blocks of a partition prior to being flattened.   In particular, all members of $\mathcal{R}_n$ must start with 1. For example, if $n=8$, then the permutation $13827456$ is a member of $\mathcal{R}_8$, as it has runs 138, 27, 456 whose first entries 1, 2, 4 are in increasing order, whereas $13845627$ does not belong to $\mathcal{R}_8$, since the first entries 1, 4, 2 are not increasing.  Members of $\mathcal{R}_n$ are referred to as \emph{run-sorted} permutations, a terminology introduced in \cite{ANab} that we adopt throughout this paper.

It is known that $\mathcal{R}_n$ has cardinality given by $B_{n-1}$ for all $n \geq 1$. Nabawanda et al.\ \cite{NRB} studied various aspects of the run distribution on $\mathcal{R}_n$ and defined a bijection between $\mathcal{R}_n$ and $\mathcal{P}_{n-1}$.  In \cite{NR}, the problem of avoidance of one or two classical patterns of length three by members of $\mathcal{R}_n$ was addressed leading to connections to several well-known sequences.  In \cite{MSh}, the distributions on $\mathcal{R}_n$ tracking the number of occurrences of consecutive patterns of length three were studied and the corresponding generating function was found in each case.  Here, we consider the joint distribution polynomial $A_n(q,u)$ defined by
\begin{equation}\label{Anqdef}
A_n(q,u)=\sum_{\pi \in \mathcal{R}_n}q^{\text{inv}(\pi)}u^{\text{run}(\pi)}, \qquad n \geq 1,
\end{equation}
where $q$ and $u$ are indeterminates.  Note that $A_{n}(1,1)=|\mathcal{R}_{n}|=B_{n-1}$ for all $n$.

The organization of this paper is as follows. In the next section, we establish several properties of the polynomial $A_n(q,u)$ starting with the defining recurrence, among them, the values of $A_n(q,u)$ when $q$ or $u$ equals $-1$ as well as the respective partial derivatives evaluated at $q=1$ or $u=1$.  These values may be expressed in terms of the Stirling, Bell and complementary Bell number sequences and we provide both explicit and asymptotic formulas.  Furthermore, a simple explicit formula in terms of the $q$-exponential function is found for the Eulerian generating function of the sequence $A_{n+1}(q,1)$ for $n\geq0$ wherein the $n!$ in the usual Maclaurin series is replaced by its $q$-analogue. In the third section, a formula is proven for the maximum number of inversions achieved by a member of $\mathcal{R}_n$, i.e., the degree of the polynomial $A_n(q,1)$. Our argument for this result entails showing that every member of $\mathcal{R}_n$ outside of those in a special subset can be modified in such a way so as to yield a member of $\mathcal{R}_n$ with a strictly greater number of inversions.  Finally, our argument may be extended to yield formulas for the maximum inv value achieved by members of $\mathcal{R}_n$ with a fixed number of runs.

A permutation $\pi=\pi_1\cdots \pi_n$ avoids the vincular pattern 32-1 if it contains no subsequence of the form $\pi_{i-1}\pi_i\pi_j$ for some $1<i< j\leq n$ such that $\pi_{i-1}>\pi_i>\pi_j$.  Let $\mathcal{V}_n$ denote the set of 32-1 avoiding permutations of $[n]$.  Note that a permutation avoids 32-1 if and only if the subsequence consisting of the first elements of its runs, excluding the first run, is increasing. Then the sets of permutations $\mathcal{R}_{n+1}$ and $\mathcal{V}_n$ are seen to be equivalent, upon deleting the element $1$ from each member of $\mathcal{R}_{n+1}$ and subtracting one from the other letters. Thus, the results here complement some recent work in \cite{Bev,QP} concerning the joint distribution of inversions and descents on $\mathcal{V}_n$.

Let us now recall some standard notation.  Given an indeterminate $q$, let $n_q=1+q\cdots+q^{n-1}$ for $n \geq 1$, with $0_q=0$.  Let $n_q^!=\prod_{i=1}^n i_q$ for $n \geq 1$, with $0_q^!=1$, denote the $q$-factorial.  The $q$-binomial coefficient $\binom{n}{k}_q$ for $n,k \geq0$ is defined by
\[
 \binom{n}{k}_q =
\begin{cases}
    \frac{n_q^!}{k_q^!(n-k)_q^!},   & \text{if } 0 \leq k \leq n; \\
    0,    & \text{otherwise}.
\end{cases}
\]
Given two sequences of real numbers $a_n$ and $b_n$, we write $a_n \sim b_n$ if $\lim_{n \rightarrow \infty}(a_n/b_n)=1$.  Finally, let $[x^m/m!]f(x)$ for a non-negative integer $m \geq0$ denote the coefficient of $x^m/m!$ in the exponential generating function $f(x)$.

\section{Joint distribution of runs and inversions}

The polynomials $A_n(q,u)$ defined by \eqref{Anqdef} satisfy the following recurrence relation.

\begin{theorem}\label{Anqrec}
If $n \geq 2$, then
\begin{equation}\label{Anqrece1}
A_n(q,u)=A_{n-1}(q,u)+u\sum_{k=1}^{n-2}q^k\binom{n-2}{k}_qA_{n-k-1}(q,u),
\end{equation}
with $A_1(q,u)=u$.
\end{theorem}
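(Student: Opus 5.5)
The plan is to decompose each $\pi\in\mathcal{R}_n$, $n\geq 2$, according to the position of the letter $2$, that is, according to whether $2$ lies in the first run of $\pi$. Since $\pi_1=1$ and the first entries of the runs increase, $2$ is either the second entry of the first run or the first entry of the second run.

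\emph{Case 1: $2$ is in the first run.} Then $\pi=12\cdots$. Deleting the $1$ and subtracting one from every remaining letter gives a bijection onto $\mathcal{R}_{n-1}$. The resulting word begins with $1$, and its runs are those of $\pi$ with the initial $1$ removed, so it is run-sorted with the same number of runs. The letter $1$ is involved in no inversion, so $\text{inv}$ is unchanged. This case therefore contributes $A_{n-1}(q,u)$.

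\emph{Case 2: $2$ starts the second run.} Write the first run as $1a_1\cdots a_k$ with $S=\{a_1<\cdots<a_k\}\subseteq\{3,\ldots,n\}$. Here $k\geq 1$, since otherwise $1$ would be followed by $2$ in an ascent. Let $T=\{3,\ldots,n\}\setminus S$. Then $\pi=1\,a_1\cdots a_k\,\sigma$, where $\sigma$ is a permutation of $\{2\}\cup T$ beginning with $2$ whose run-starts are increasing. Conversely, for any $S$ with $1\le k\le n-2$ and any such $\sigma$, the concatenation is run-sorted. Indeed, $a_k>2$ forces a descent, so the run-starts of $\pi$ are $1$ followed by those of $\sigma$, all of which are at least $2$. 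Standardizing $\sigma$ gives a bijection onto $\mathcal{R}_{n-k-1}$, and $\text{run}(\pi)=1+\text{run}(\sigma)$, which produces the factor $u$.

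For the inversions, I would split $\text{inv}(\pi)$ into three parts:
\begin{enumerate}[label=(\roman*)]
\item the inversions inside $\sigma$, which equal $\text{inv}$ of its standardization;
\item the inversions inside the first run, of which there are none, since that run is increasing and $1$ is in no inversion;
\item the cross pairs $(a,b)$ with $a\in S$ and $b$ in $\sigma$, where every element of $S$ precedes every letter of $\sigma$.
\end{enumerate}
The cross pairs with $b=2$ give exactly $k$ inversions, hence the factor $q^k$. The remaining cross inversions are the pairs $a\in S$, $b\in T$ with $a>b$. Their number depends only on the choice of $S$ within the $(n-2)$-set $\{3,\ldots,n\}$, not on $\sigma$. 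By the standard fact that $\sum_{S}q^{\#\{(a,b)\in S\times T:\,a>b\}}=\binom{n-2}{k}_q$, summing over $S$ gives the $q$-binomial coefficient.

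Multiplying the three contributions and summing over $k$ gives the second term of \eqref{Anqrece1}. The initial condition $A_1=u$ holds because the only member of $\mathcal{R}_1$ is $1$, which has one run and no inversions. The only step needing care is checking that the cross-inversion count factors independently of $\sigma$, and this holds because all of $S$ precedes $\sigma$.
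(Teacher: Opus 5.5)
Your proof is correct and follows essentially the same route as the paper. You split according to whether $2$ lies in the first run, delete $1$ in that case, and otherwise strip off the first run $1a_1\cdots a_k$, picking up $u$, the factor $q^k$ from the pairs $(a,2)$, and $\binom{n-2}{k}_q$ from the pairs $(a,b)\in S\times T$ with $a>b$; your explicit checks that the concatenation is run-sorted and that the cross-inversion count does not depend on $\sigma$ fill in details the paper leaves implicit.
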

\begin{proof}
The initial condition when $n=1$ as well as the $n=2$ case of \eqref{Anqrece1} are clear.  So we argue \eqref{Anqrece1} for $n \geq 3$ and let $\pi \in \mathcal{R}_n$.  If $1$ and $2$ belong to the same run of $\pi$, then one gets $A_{n-1}(q,u)$ possibilities  in this case, upon deleting 1 and subtracting one from every other entry of $\pi$.

Now suppose 1 and 2 belong to different runs of $\pi$.  Since runs are ordered by increasing first elements, the initial run comprises 1 together with some subset $S$ of $[3,n]$, written in increasing order, with 2 starting the second run.  Let $|S|=k$, where $1 \leq k \leq n-2$.  Deleting the entire first run and standardizing the remaining entries, we obtain a member of $\mathcal{R}_{n-k-1}$.  This yields $uA_{n-k-1}(q,u)$, where the factor of $u$ accounts for the deleted first run.

Thus, it remains to determine the contribution towards the weight derived from the members of $S$.  First note that each member of $S$ occurs to the left of 2, which accounts for the factor of $q^k$.  Recall that $\binom{n}{m}_q$ gives the inv distribution for binary words consisting of $m$ 1's and $n-m$ 0's; see, e.g., \cite[p.\,26]{Stan}.  Upon encoding each member of $S$ by a 1, and each member of $[3,n]\backslash S$ by a 0, it follows that $\binom{n-2}{k}_q$ accounts for both the choice of $S$ and the inversions arising from elements of $S$ occurring to the left of those in $[3,n]\backslash S$ within $\pi$.  Combining the prior observations, we have a contribution of $uq^k\binom{n-2}{k}_qA_{n-k-1}(q,u)$ towards the distribution for each $k$.  Considering all possible $k$ yields the summation on the right-hand side of \eqref{Anqrece1} and completes the proof of \eqref{Anqrece1}.
\end{proof}

\noindent \emph{Remark:}  A recurrence equivalent to \eqref{Anqrece1} was found in \cite[Theorem 2.5]{QP} for the joint distribution on $\mathcal{V}_n$ of the number of inversions and descents via a lengthier argument considering Lehmer codes of 32-1 avoiding permutations.

\subsection{Explicit formulas in some particular cases}

Let $\mathcal{R}_{n,k}$ denote the subset of $\mathcal{R}_n$ consisting of those members that contain $k$ runs and let $A_{n,k}(q)$ be the restriction of $A_n(q,u)$ to $\mathcal{R}_{n,k}$ for $1 \leq k \leq \lfloor(n+1)/2 \rfloor$. Let $G_n(q)=\sum_{k=0}^n\binom{n}{k}_q$ for $n \geq 0$ denote the $n$-th \emph{Galois} number, which reduces to $2^n$ when $q=1$.  Recall that the $G_n(q)$ are given recursively by $G_{n+1}(q)=2G_{n}(q)+(q^n-1)G_{n-1}(q)$ for $n\geq 1$, see \cite{GR}.  There are the following explicit formulas for $A_{n,2}(q)$ and $A_{n,3}(q)$ in terms of $G_n(q)$.

\begin{proposition}\label{An,2(q)}
We have
\begin{equation}\label{An,2(q)e1}
A_{n,2}(q)=G_{n-1}(q)-n, \qquad n \geq3,
\end{equation}
and
\begin{equation}\label{An,2(q)e2}
A_{n,3}(q)=\sum_{m=1}^{n-4}\left(\binom{n-1}{m}_q-1\right)(G_{n-m-2}(q)-n+m+1),  \qquad n \geq 5.
\end{equation}
\end{proposition}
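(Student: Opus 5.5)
The plan is to bypass the recurrence of Theorem~\ref{Anqrec} and count $\mathcal{R}_{n,2}$ and $\mathcal{R}_{n,3}$ directly. The first run is split off and encoded by a binary word, and the $q$-binomial interpretation quoted in that proof (that $\binom{N}{m}_q$ is the inv distribution of binary words with $m$ ones and $N-m$ zeros) is then used. Taking the $u$-coefficients in \eqref{Anqrece1} would also work, but the direct count produces the stated closed forms immediately.

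\textbf{Case $k=2$.} A member $\pi\in\mathcal{R}_{n,2}$ has the form $\pi=1\,T\,C$, where $T\subseteq[2,n]$ is written increasingly, $C=[2,n]\setminus T$ is nonempty and written increasingly, and there is a descent at the junction. Conversely, any pair $(T,C)$ gives such a $\pi$ whenever the junction is a descent; the start of the second run is then automatically larger than $1$.
\begin{itemize}
\item Encode $T$ by a binary word $w$ of length $n-1$ indexed by the values $2,\ldots,n$, with letter $1$ for members of $T$.
\item Every inversion of $\pi$ is a pair $t\in T$, $c\in C$ with $t>c$. These pairs correspond to occurrences of a $0$ before a $1$ in $w$. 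By reversing/complementing, this statistic has the same distribution as inv on words, so summing over all $w$ gives $G_{n-1}(q)$.
\item The junction fails to be a descent exactly when $C=\emptyset$ or $\max T<\min C$. These are precisely the words with no $0$ before a $1$, namely $1^a0^{\,n-1-a}$ for $0\le a\le n-1$. There are $n$ of them, each of weight $q^0=1$.
\end{itemize}
Subtracting these gives \eqref{An,2(q)e1}.

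\textbf{Case $k=3$.} Write $\pi=1\,T\,\sigma$, where $|T|=m$ and $\sigma$ is the two-run arrangement of $R=[2,n]\setminus T$, with $|R|=r=n-1-m$. The inversions of $\pi$ split into two parts:
\begin{itemize}
\item pairs between $T$ and $R$, weighted by the binary word on $[2,n]$ exactly as in the case $k=2$;
\item inversions internal to $\sigma$.
\end{itemize}
After standardizing, $\sigma$ is a permutation of $[r]$ starting with its minimum and having two runs, with increasing run starts. This is precisely a member of $\mathcal{R}_{r,2}$, and standardization preserves inv. Two conditions remain.
\begin{itemize}
\item The junction between $1T$ and $\sigma$ must be a descent. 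This holds iff $\max T>\min R$, i.e.\ iff $T$ is nonempty and $w$ is not of the form $1^m0^r$. That excludes exactly one word of weight $1$, giving the factor $\binom{n-1}{m}_q-1$.
\item We need $r\ge3$ for $\sigma$ to have two runs.
\end{itemize}
Summing over $1\le m\le n-4$ then gives
\[
A_{n,3}(q)=\sum_{m=1}^{n-4}\Bigl(\binom{n-1}{m}_q-1\Bigr)A_{n-1-m,2}(q).
\]
Inserting \eqref{An,2(q)e1} with $n-1-m$ in place of $n$, namely $A_{n-1-m,2}(q)=G_{n-m-2}(q)-(n-1-m)$, yields \eqref{An,2(q)e2}.

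The main point needing care is the bookkeeping of inversions in the binary-word encoding. One must check that the cross inversions between the first run and the rest are exactly the $0$-before-$1$ pairs, whose distribution equals $\binom{N}{m}_q$ by symmetry, and that the zero-inversion words are exactly those that would merge the first run with the next one. Once that is verified, both identities follow.
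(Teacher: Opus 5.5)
Your proposal is correct and follows essentially the same route as the paper's direct argument. You encode the elements of the first run other than $1$ as a binary word on $[2,n]$, use the $q$-binomial interpretation of the cross inversions, and discard the words giving an interval $T=[2,m+1]$; for three runs you multiply by the weight $G_{n-m-2}(q)-(n-m-1)$ of the standardized remainder in $\mathcal{R}_{n-m-1,2}$. The only cosmetic difference is that for two runs you sum over all $2^{n-1}$ words and subtract the $n$ weight-one words $1^a0^{n-1-a}$ at once, whereas the paper sums $\binom{n-1}{m}_q-1$ over $1\le m\le n-2$, which is the same computation.
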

\begin{proof}
Let $a_n=A_{n,2}(q)$.  Comparing coefficients of $u^2$ on both sides of \eqref{Anqrece1}, we obtain the recurrence
\begin{equation}\label{An,2(q)e3}
a_n=a_{n-1}+\sum_{k=1}^{n-2}q^k\binom{n-2}{k}_q, \qquad n \geq 2,
\end{equation}
with $a_2=0$.  By induction using \eqref{An,2(q)e3} and the recurrence for $q$-binomial coefficients, one can show $a_n=\sum_{k=1}^{n-1}\binom{n-1}{k}_q-(n-1)$, which implies \eqref{An,2(q)e1}.  Alternatively, formula \eqref{An,2(q)e1} may be argued directly as follows.  Let $\pi \in \mathcal{R}_{n,2}$ and let $T$ denote the subset of $[2,n]$ comprising those elements greater than 1 that belong to the first run of $\pi$.  Note $\max(T)>\min([2,n]\backslash T)$ since $\pi$ contains two runs, and hence $T$ cannot be an interval of the form $[2,k]$ for some $k \geq 2$.  Upon encoding each element of $[2,n]$ by a 1 or 0 according to whether or not it belongs to $T$, we have that the weight of those $\pi$ for which $|T|=m$ is given by $\binom{n-1}{m}_q-1$, where the subtracted term accounts for the excluded subset $[2,m+1]$ and $1 \leq m \leq n-2$. Considering all possible $m$ then gives
$$a_n=\sum_{m=1}^{n-2}\left(\binom{n-1}{m}_q-1\right)=G_{n-1}(q)-n,$$
as desired.

For \eqref{An,2(q)e2}, let $\pi \in \mathcal{R}_{n,3}$, where $n \geq 5$.  Let $T$ be as before, and hence $T$ is not an interval.  If $|T|=m$, then $\pi$ containing three runs implies $m \in [n-4]$.  The choice of $T$, along with the inversions it renders, is accounted for by $\binom{n-1}{m}_q-1$ for each $m$, with the elements of $[2,n]\backslash T$ comprising a member of $\mathcal{R}_{n-m-1,2}$, when standardized.  By \eqref{An,2(q)e1}, these elements have weight $G_{n-m-2}(q)-(n-m-1)$, independent of the choice of $T$.  This implies that the weight of the members of $\mathcal{R}_{n,3}$ for which $|T|=m$ is given by
$$\left(\binom{n-1}{m}_q-1\right)(G_{n-m-2}(q)-n+m+1).$$
Considering all possible $m$ yields \eqref{An,2(q)e2} and completes the proof.
\end{proof}

\begin{proposition}\label{totruns}
The number of runs in all the members of $\mathcal{R}_n$ is given by $B_n-(n-1)B_{n-2}$ for $n \geq 2$.
\end{proposition}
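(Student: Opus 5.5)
Set $r_n=\frac{\partial}{\partial u}A_n(q,u)\big|_{q=u=1}$, which counts all runs over the members of $\mathcal{R}_n$. Put $q=1$ in \eqref{Anqrece1}, differentiate with respect to $u$, and then set $u=1$. Since $A_m(1,1)=B_{m-1}$, this gives, for $n\geq 2$,
\begin{equation*}
r_n=r_{n-1}+\sum_{k=1}^{n-2}\binom{n-2}{k}\left(B_{n-k-2}+r_{n-k-1}\right),
\end{equation*}
with $r_1=1$. The Bell recurrence $B_{n-1}=\sum_{j=0}^{n-2}\binom{n-2}{j}B_j$ gives $\sum_{k=1}^{n-2}\binom{n-2}{k}B_{n-k-2}=B_{n-1}-B_{n-2}$. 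The recurrence therefore becomes
\begin{equation*}
r_n=B_{n-1}-B_{n-2}+\sum_{k=0}^{n-2}\binom{n-2}{k}r_{n-k-1}.
\end{equation*}

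Next I would verify by induction on $n$ that $r_n=B_n-(n-1)B_{n-2}$. For the base cases, $r_2=1=B_2-B_0$ and $r_3=3=B_3-2B_1$. In the inductive step, substitute $r_m=B_m-(m-1)B_{m-2}$ for $m=n-k-1$ (with $m\ge 2$) and handle the $m=1$ term $r_1=1$ separately. This requires two sums.
\begin{itemize}
\item The first is $\sum_{k}\binom{n-2}{k}B_{n-k-1}$. Setting $j=n-2-k$, it equals $\sum_j\binom{n-2}{j}B_{j+1}$. By the Bell identity $\sum_j\binom{m}{j}B_{j+1}=B_{m+2}-B_{m+1}$, which follows from applying the standard recurrence to $B_{m+2}$ and using Pascal's rule, this is $B_n-B_{n-1}$.
\item The second is $\sum_j\binom{n-2}{j}(j)B_{j-1}$. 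Since $j\binom{n-2}{j}=(n-2)\binom{n-3}{j-1}$, it equals $(n-2)\sum_i\binom{n-3}{i}B_i=(n-2)B_{n-2}$.
\end{itemize}

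These index shifts and the isolated $r_1$ term are the main obstacle, so the boundary terms must be tracked carefully. The $j=0$ term corresponds to $m=1$. There the formula would give $B_1-0=1=r_1$, so the closed form is in fact valid at $m=1$ and no correction is needed.

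Assembling the pieces gives
\begin{equation*}
r_n=B_{n-1}-B_{n-2}+(B_n-B_{n-1})-(n-2)B_{n-2}=B_n-(n-1)B_{n-2},
\end{equation*}
which is the claim.

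A combinatorial alternative is to count the pairs (member, run) via the bijection with partitions. I would keep this only as a cross-check; the generating-function/recurrence route above is the planned argument.
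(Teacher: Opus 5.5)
Your proposal is correct and follows the paper's proof essentially step for step. You derive the same recurrence $r_n=B_{n-1}-B_{n-2}+\sum_{k=0}^{n-2}\binom{n-2}{k}r_{n-k-1}$ by differentiating in $u$, and then run the same induction using $\sum_j\binom{m}{j}B_{j+1}=B_{m+2}-B_{m+1}$ together with the absorption identity $j\binom{n-2}{j}=(n-2)\binom{n-3}{j-1}$; the only difference is that you prove the first identity via Pascal's rule, whereas the paper gives a combinatorial argument for it.
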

\begin{proof}
Let $t_n=\sum_{\pi \in \mathcal{R}_n}\text{run}(\pi)$ for $n \geq 1$. Differentiating both sides of \eqref{Anqrece1} with respect to $u$, setting $u=1$ and noting $A_n(1,1)=B_{n-1}$, we obtain
$$t_n=t_{n-1}+\sum_{k=1}^{n-2}\binom{n-2}{k}B_{n-k-2}+\sum_{k=1}^{n-2}\binom{n-2}{k}t_{n-k-1}, \qquad n \geq 2.$$
By the recurrence for Bell numbers (see, e.g., \cite[p.\,34]{Stan}), we have equivalently
\begin{equation}\label{totrunse1}
t_n=B_{n-1}-B_{n-2}+\sum_{k=0}^{n-2}\binom{n-2}{k}t_{n-k-1}, \qquad n \geq 2,
\end{equation}
with $t_1=1$.

We proceed inductively, noting that the result is clear for $n=2$.  First observe the identity $B_{n+1}-B_n=\sum_{k=0}^{n-1}\binom{n-1}{k}B_{n-k}$ for $n\geq 1$, since the right side of the equality enumerates the partitions of $[n+1]$  where 1 and $n+1$ do not occur in the same block (for which there are $B_{n+1}-B_n$, by subtraction), upon considering the number $k$ of elements in the block containing 1.  Thus, by \eqref{totrunse1} and also the usual recurrence for Bell numbers, we have for $n \geq 2$,
\begin{align*}
t_{n+1}&=B_n-B_{n-1}+\sum_{k=0}^{n-1}\binom{n-1}{k}t_{n-k}=B_n-B_{n-1}+\sum_{k=0}^{n-1}\binom{n-1}{k}\left(B_{n-k}-(n-k-1)B_{n-k-2}\right)\\
&=B_n-B_{n-1}+B_{n+1}-B_n-\sum_{k=0}^{n-2}(n-k-1)\binom{n-1}{k}B_{n-k-2}\\
&=B_{n+1}-B_{n-1}-(n-1)\sum_{k=0}^{n-2}\binom{n-2}{k}B_{n-k-2}=B_{n+1}-nB_{n-1},
\end{align*}
which completes the induction and proof.
\end{proof}

The prior result may be argued combinatorially as follows.  Recall that the statistic on $\mathcal{R}_{n+1}$ tracking the number of runs is equivalent to the parameter on $\mathcal{P}_n$ recording one plus the number of non-singleton blocks (see \cite{NRB}, where a bijection is defined between $\mathcal{R}_{n+1}$ and $\mathcal{P}_n$ demonstrating this fact).  Thus, to establish $t_{n+1}=B_{n+1}-nB_{n-1}$, it suffices to show that there are $B_{n+1}-B_n-nB_{n-1}$ non-singleton blocks altogether in $\mathcal{P}_n$.  This may be achieved as follows.  First note that there are clearly $nB_{n-1}$ singleton blocks in $\mathcal{P}_n$, as each block $\{i\}$ for $i \in [n]$ occurs $B_{n-1}$ times.  Further, there are $B_{n+1}-B_n$ blocks altogether in $\mathcal{P}_n$.  To see this, note that marked members of $\mathcal{P}_n$ wherein one of the blocks is marked are in one-to-one correspondence with the subset of $\mathcal{P}_{n+1}$ consisting of those partitions in which the singleton $\{n+1\}$ does not occur, upon adding the element $n+1$ to the marked block within a member of the first set.  Since there are $B_{n+1}-B_n$ members of $\mathcal{P}_{n+1}$ in which $\{n+1\}$ does not occur, by subtraction, we get the same number of blocks altogether in $\mathcal{P}_n$.  Thus, the desired formula stated above for the number of non-singleton blocks in $\mathcal{P}_n$ follows by subtraction.

Let $T_{n,k}$ denote the \emph{associated} Stirling number of the second kind (see \cite[A008299]{Sl}), which enumerates the partitions of $[n]$ into $k$ non-singleton blocks and is given recursively by $T_{n+1,k}=kT_{n,k}+nT_{n-1,k-1}$.  We have the following explicit formula for the cardinality of $\mathcal{R}_{n+1,k+1}$ in terms of $T_{n,k}$.

\begin{proposition}\label{R_{n,k}expl}
If $n \geq 0$ and $0 \leq k \leq \lfloor n/2 \rfloor$, then
\begin{equation}\label{R_{n,k}exple1}
|\mathcal{R}_{n+1,k+1}|=\sum_{i=0}^{n-2k}\binom{n}{i}T_{n-i,k}.
\end{equation}
Thus, we have for $n \geq 1$ the identity
\begin{equation}\label{R_{n,k}exple2}
B_{n+1}-nB_{n-1}=\sum_{k=0}^{\lfloor n/2 \rfloor}\sum_{i=0}^{n-2k}(k+1)\binom{n}{i}T_{n-i,k}.
\end{equation}
\end{proposition}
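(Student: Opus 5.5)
We use the fact from \cite{NRB}, quoted after Proposition~\ref{totruns}, that the run statistic on $\mathcal{R}_{n+1}$ corresponds to one plus the number of non-singleton blocks on $\mathcal{P}_n$. Under the bijection of \cite{NRB}, $|\mathcal{R}_{n+1,k+1}|$ therefore equals the number of partitions of $[n]$ with exactly $k$ non-singleton blocks. We count these by first choosing the set of singleton blocks. Let $i$ be the number of elements of $[n]$ lying in singleton blocks; there are $\binom{n}{i}$ ways to choose this set. The remaining $n-i$ elements must then be partitioned into exactly $k$ blocks, each of size at least two, and by definition there are $T_{n-i,k}$ such partitions. Since $k$ blocks of size at least two need at least $2k$ elements, we have $n-i\ge 2k$, so $i\le n-2k$. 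Summing over $i$ gives \eqref{R_{n,k}exple1}.

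If one prefers not to rely on the bijection, a self-contained check is to compare recurrences. Extracting the coefficient of $u^{k+1}$ in \eqref{Anqrece1} at $q=1$ gives
$$|\mathcal{R}_{n+1,k+1}|=|\mathcal{R}_{n,k+1}|+\sum_{j\ge1}\binom{n-1}{j}|\mathcal{R}_{n-j,k}|.$$
One would then verify that the right-hand side of \eqref{R_{n,k}exple1} satisfies the same relation with the same initial values. This is more laborious: it uses $T_{n+1,k}=kT_{n,k}+nT_{n-1,k-1}$ together with binomial manipulations. For that reason I would present the combinatorial argument as the main proof. The only point that needs care is the exact form of the statistic correspondence, namely that runs in $\mathcal{R}_{n+1}$ equal one plus the number of non-singleton blocks in $\mathcal{P}_n$; this is precisely what is stated in the text and taken from \cite{NRB}.

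For \eqref{R_{n,k}exple2}, note that every member of $\mathcal{R}_{n+1}$ has between $1$ and $\lfloor n/2\rfloor+1$ runs. Hence the total number of runs over $\mathcal{R}_{n+1}$ is
$$\sum_{k=0}^{\lfloor n/2\rfloor}(k+1)\,|\mathcal{R}_{n+1,k+1}|.$$
By Proposition~\ref{totruns} with $n+1$ in place of $n$, this total equals $B_{n+1}-nB_{n-1}$ for $n\ge1$. Substituting \eqref{R_{n,k}exple1} into the sum gives the identity. There is no real obstacle here; the step carrying the content is the identification of runs with non-singleton blocks plus one, which is imported from \cite{NRB}.
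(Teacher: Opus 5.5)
Your proposal is correct and matches the paper's proof: both invoke \cite[Theorem 19]{NRB} to count partitions of $[n]$ with exactly $k$ non-singleton blocks by choosing the $i\le n-2k$ singletons ($\binom{n}{i}T_{n-i,k}$ ways), and both derive the identity by weighting $|\mathcal{R}_{n+1,k+1}|$ by $k+1$ and applying Proposition~\ref{totruns}. The recurrence-based check you sketch is optional and not needed.
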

\begin{proof}
By \cite[Theorem 19]{NRB}, to determine $|\mathcal{R}_{n+1,k+1}|$, we equivalently count members of $\mathcal{P}_n$ that contain exactly $k$ non-singleton blocks. Let $i$ denote the number of singletons within such a member of $\mathcal{P}_n$.  Then $0 \leq i \leq n-2k$ since each of the other blocks is of size at least two and there are $\binom{n}{i}T_{n-i,k}$ possibilities for each $i$. Considering all possible $i$ yields \eqref{R_{n,k}exple1}.  Identity \eqref{R_{n,k}exple2} then follows from \eqref{R_{n,k}exple1} and Proposition \ref{totruns}, together with the fact $t_{n+1}=\sum_{k=0}^{\lfloor n/2 \rfloor}(k+1)|\mathcal{R}_{n+1,k+1}|$.
\end{proof}

Using \cite[Theorem 19]{NRB} and the exponential formula (see, e.g., \cite[Theorem 6.1.4]{Wag}), one can show
\begin{equation}\label{Txuform}
\sum_{n\geq0}A_{n+1}(1,u)\frac{x^n}{n!}=ue^{u(e^x-1)+x(1-u)}.
\end{equation}

From \eqref{Txuform}, we may derive an explicit formula for the sign-balance of the run parameter on $\mathcal{R}_n$ as follows. Let $\mathcal{R}_n^{(e)}$ and $\mathcal{R}_n^{(o)}$ denote the subsets of $\mathcal{R}_n$ whose members contain an even or an odd number or runs, respectively.  Let $B_n^*$ denote the $n$-th complementary Bell number defined recursively by $B_{n+1}^*=-\sum_{i=0}^n\binom{n}{i}B_i^*$ for $n \geq0$, with $B_0^*=1$; see, e.g., \cite{RUC} or \cite[A000587]{Sl}. Note that the $u=-1$ case of \eqref{Txuform} may be written as
$$\sum_{n\geq0}A_{n+1}(1,-1)\frac{x^n}{n!}=-e^{1+2x-e^x}=\frac{d}{dx}\left(e^{1-e^x}\right)-\frac{d^2}{dx^2}\left(e^{1-e^x}\right).$$
Recalling the fact $e^{1-e^x}=\sum_{n\geq0}B_n^*\frac{x^n}{n!}$ yields the following result.

\begin{proposition}\label{sgnbal}
If $n \geq 1$, then $A_{n}(1,-1)=|\mathcal{R}_{n}^{(e)}|-|\mathcal{R}_{n}^{(o)}|=B_{n}^*-B_{n+1}^*$.
\end{proposition}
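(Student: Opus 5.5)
Since $A_n(1,u)=\sum_{\pi\in\mathcal{R}_n}u^{\text{run}(\pi)}$, setting $u=-1$ gives $\sum_{\pi}(-1)^{\text{run}(\pi)}$. Each member of $\mathcal{R}_n^{(e)}$ contributes $+1$ and each member of $\mathcal{R}_n^{(o)}$ contributes $-1$. Hence $A_n(1,-1)=|\mathcal{R}_n^{(e)}|-|\mathcal{R}_n^{(o)}|$ directly from the definition \eqref{Anqdef}. It remains to show $A_{n}(1,-1)=B_n^*-B_{n+1}^*$, and I would get this by extracting coefficients from the $u=-1$ specialization of \eqref{Txuform} recorded just before the statement.

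Put $F(x)=e^{1-e^x}=\sum_{m\geq0}B_m^*x^m/m!$. Then $F'(x)=-e^xF(x)$, and the product rule gives
\begin{equation*}
F''(x)=-e^xF(x)+e^{2x}F(x).
\end{equation*}
Consequently
\begin{equation*}
F'(x)-F''(x)=-e^{2x}F(x)=-e^{1+2x-e^x},
\end{equation*}
which confirms the displayed identity. So $\sum_{n\geq0}A_{n+1}(1,-1)x^n/n!=F'(x)-F''(x)$.

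Differentiating an exponential generating function shifts the index: $[x^n/n!]F'(x)=B_{n+1}^*$ and $[x^n/n!]F''(x)=B_{n+2}^*$. Therefore $A_{n+1}(1,-1)=B_{n+1}^*-B_{n+2}^*$ for $n\geq0$. Replacing $n+1$ by $n$ gives $A_n(1,-1)=B_n^*-B_{n+1}^*$ for all $n\geq1$.

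The only step with real content is \eqref{Txuform} itself, which I take as given. The one point to watch is the index shift $A_{n+1}$ versus $A_n$ in the generating function. A quick check at $n=1$: $\mathcal{R}_1=\{1\}$ has one run, so $A_1(1,-1)=-1$. On the other side, $B_1^*=-1$ and $B_2^*=-(B_0^*+B_1^*)=0$, so $B_1^*-B_2^*=-1$, which agrees.
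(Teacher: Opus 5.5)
Your proposal is correct and follows the paper's own route: specialize \eqref{Txuform} at $u=-1$, write $-e^{1+2x-e^x}$ as $F'(x)-F''(x)$ with $F(x)=e^{1-e^x}=\sum_{m\geq0}B_m^*x^m/m!$, and read off $A_{n+1}(1,-1)=B_{n+1}^*-B_{n+2}^*$. You also verify the derivative identity that the paper only asserts, and your handling of the index shift and the $n=1$ check are both correct.
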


\noindent \emph{Remark:} Extracting the coefficient of $x^n$ in \eqref{Txuform} when $u=-1$ in two different ways yields the additional formulas
$$A_{n+1}(1,-1)=-\sum_{i=0}^n\binom{n}{i}2^iB_{n-i}^*=\sum_{i=0}^n\binom{n}{i}B_{n-i+1}^*,$$
which give identities for $B_n^*$ when taken together with the result of Corollary \ref{sgnbal}.  For an asymptotic estimate of $B_n^*$, and hence of $A_{n}(1,-1)$, see \cite[Theorem 1]{Ya}.    \medskip

Let $S_{n,k}$ denote the classical Stirling number of the second kind, see \cite[A008277]{Sl}.  Our final result in this subsection expresses the cardinality of $\mathcal{R}_{n+1,k+1}$ in terms of Stirling numbers.

\begin{theorem}\label{Rnkcard}
If $n \geq1$ and $0 \leq k \leq \lfloor n/2 \rfloor$, then
\begin{equation}\label{Rnkcarde1}
|\mathcal{R}_{n+1,k+1}|=\sum_{i=0}^k\sum_{j=i}^n (-1)^{k-i}\binom{n}{j}\binom{n-j}{k-i}S_{j,i}.
\end{equation}
\end{theorem}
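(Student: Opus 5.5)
We will work with the exponential generating function \eqref{Txuform}, or equivalently with the combinatorial description behind it. By \cite[Theorem 19]{NRB}, as already used in the proof of Proposition \ref{R_{n,k}expl}, $|\mathcal{R}_{n+1,k+1}|$ counts the partitions of $[n]$ having exactly $k$ non-singleton blocks. Dividing \eqref{Txuform} by $u$ and extracting the coefficient of $u^k$, this count is $[x^n/n!]\, e^x\,(e^x-1-x)^k/k!$.

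The plan is to evaluate this coefficient by inclusion--exclusion on the singleton blocks, in the binomial form suggested by the double sum in \eqref{Rnkcarde1}.

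First, write $(e^x-1-x)^k=\sum_{i=0}^k\binom{k}{i}(e^x-1)^i(-x)^{k-i}$. Then recall $(e^x-1)^i/i!=\sum_j S_{j,i}x^j/j!$, so that
\[
\frac{(e^x-1-x)^k}{k!}=\sum_{i=0}^k \frac{(-1)^{k-i}x^{k-i}}{(k-i)!}\cdot\frac{(e^x-1)^i}{i!}.
\]
Next, multiply by $e^x$ and note that $e^x\,x^{m}/m!$ has coefficient $\binom{n}{m}$ at $x^n/n!$. Taking the product of the three exponential series, the coefficient of $x^n/n!$ becomes
\[
\sum_{i=0}^k(-1)^{k-i}\sum_{j}\binom{n}{j}S_{j,i}\binom{n-j}{k-i}.
\]
Here $j$ indexes the elements covered by the $i$ blocks counted by $S_{j,i}$. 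The factor $\binom{n-j}{k-i}$ chooses the $k-i$ marked singletons among the remaining $n-j$ elements, and the rest are unmarked singletons coming from $e^x$. Since $S_{j,i}=0$ for $j<i$, the range of $j$ can be taken as $i\le j\le n$, and we obtain exactly \eqref{Rnkcarde1}.

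Combinatorially, this is the identity that counts partitions of $[n]$ with exactly $k$ non-singleton blocks. One first chooses a set of $j$ elements partitioned into $i$ blocks and then designates $k-i$ of the remaining elements as extra singleton blocks, weighted by $(-1)^{k-i}$. A configuration whose $k$ designated blocks include $s$ singletons is counted with total weight $\sum_{t}\binom{s}{t}(-1)^t$. This sum is nonzero only when $s=0$, so only partitions with exactly $k$ non-singleton blocks survive.

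I would present the generating-function calculation as the proof and mention the sign-reversing interpretation as a remark. The only delicate step is the bookkeeping of the product of three exponential series so that the binomials come out as $\binom{n}{j}\binom{n-j}{k-i}$ and not as a single multinomial. This needs care, but it is routine.
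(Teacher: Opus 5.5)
Your proposal is correct and takes the paper's own route. The paper also extracts the coefficient of $x^nu^{k+1}/n!$ from \eqref{Txuform}, and the difference is only bookkeeping: it expands $e^{u(e^x-1)}\cdot e^{(1-u)x}$ and obtains $(-1)^{k-i}\binom{n-j}{k-i}$ as $[u^{k-i}](1-u)^{n-j}$, whereas you first regroup as $e^x e^{u(e^x-1-x)}$ and expand $(e^x-1-x)^k$ binomially. Your combinatorial remark is likewise the paper's second proof, which carries out your $(1-1)^s$ cancellation as an explicit sign-reversing involution that moves the largest eligible singleton between the marked set and the partitioned set.
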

\begin{proof}
We provide two proofs of \eqref{Rnkcarde1}, the first one making use of exponential generating functions.  Recall the well-known egf formula $\frac{1}{m!}(e^x-1)^m=\sum_{n\geq m}S_{n,m}\frac{x^n}{n!}$ for a fixed non-negative integer $m$ (see, e.g., \cite[p.\,34]{Stan}).  Extracting the coefficient of $\frac{x^nu^{k+1}}{n!}$ in \eqref{Txuform} then gives
\begin{align*}
|\mathcal{R}_{n+1,k+1}|&=\left[\frac{x^nu^{k+1}}{n!}\right]\left(ue^{u(e^x-1)+(1-u)x}\right)=\left[\frac{x^nu^k}{n!}\right]\left(e^{u(e^x-1)}\cdot e^{(1-u)x}\right)\\
&=\left[\frac{x^nu^k}{n!}\right]\left(\sum_{i\geq0}\frac{u^i(e^x-1)^i}{i!}\cdot\sum_{\ell\geq0}\frac{(1-u)^\ell x^\ell}{\ell !}\right)=n!\sum_{i=0}^k[x^nu^{k-i}]\left(\frac{(e^x-1)^i}{i!}\cdot\sum_{\ell \geq0}\frac{(1-u)^\ell x^\ell}{\ell !}\right)\\
&=n!\sum_{i=0}^k\sum_{j=i}^n\frac{S_{j,i}}{j!}\cdot [u^{k-i}]\frac{(1-u)^{n-j}}{(n-j)!}=n!\sum_{i=0}^k\sum_{j=i}^n \frac{S_{j,i}}{j!(n-j)!}\cdot (-1)^{k-i}\binom{n-j}{k-i},
\end{align*}
which may be rewritten as \eqref{Rnkcarde1}.

For a combinatorial proof, first consider the set $\mathcal{W}_{i,j}=W_{i,j}^{(n,k)}$ of ordered triples $\lambda=(\alpha,\beta,\gamma)$, where $0 \leq i \leq \min\{j,k\}$ and $0 \leq j \leq n-k+i$, whose components are defined as follows:  (I) $\alpha \subseteq [n]$, with $|\alpha|=j$, (II) $\beta \subseteq [n] \backslash \alpha$,  with $|\beta|=k-i$, and (III) $\gamma$ is a partition of the elements of $\alpha$ containing exactly $i$ blocks.  Define the sign of a member of $\mathcal{W}_{i,j}$ by $(-1)^{k-i}$ and let $\mathcal{W}=\cup_{i,j}\mathcal{W}_{i,j}$.  Then the right side of \eqref{Rnkcarde1} is seen to give the sum of the signs of all the members of $\mathcal{W}$.

Given $\lambda \in \mathcal{W}$, let $p_o$ denote the largest $p \in [n]$, if it exists, such that either (i) $p \in \beta$ or (ii) $p \in \alpha$, with the singleton block $\{p\}$ occurring in $\gamma$.  If (i), then we move $p_o$ from $\beta$ to $\alpha$ and add the block $\{p_o\}$ to $\gamma$.  If (ii) occurs, then we reverse this operation deleting the block $\{p_o\}$ from $\gamma$ and moving $p_o$ from $\alpha$ to $\beta$.
Let $\lambda'$ denote the resulting member of $\mathcal{W}$ in either case.  Then it is seen that the mapping $\lambda \mapsto \lambda'$ is an involution on $\mathcal{W}$, where defined, which reverses the sign (as $i$ always changes by one).

Let $\mathcal{W}'$ denote the set of survivors $\lambda=(\alpha,\beta,\gamma)$ of this involution.  Then within $\lambda$, we must have $\beta=\varnothing$, whence $k-i=0$, with $\gamma$ containing no singleton blocks.  Thus, each member of $\mathcal{W}'$ has positive sign, with
$|\mathcal{W}'|=\sum_{j=2k}^n\binom{n}{j}T_{j,k}$, where $T_{j,k}$ denotes the associated Stirling number.  By \eqref{R_{n,k}exple1}, we then have $|\mathcal{W}'|=|\mathcal{R}_{n+1,k+1}|$, which implies \eqref{Rnkcarde1}.
\end{proof}

\subsection{Formulas for the sign-balance and total inv}

 Let $m!!=m(m-2)\cdots 1$ for an odd positive integer $m$, with $(-1)!!=1$.  We have the following formulas for the sign-balance of the inv parameter on $\mathcal{R}_n$.

 \begin{theorem}\label{sgnbal}
 If $n \geq0$, then
 \begin{align}
 A_{2n+1}(-1,1)&=\sum_{i=0}^{\lfloor n/2 \rfloor}(2i-1)!!S_{n,2i}, \label{sgnbale1}\\
A_{2n+2}(-1,1)&=\sum_{i=0}^{\lfloor n/2 \rfloor}\sum_{j=2i}^n(2i-1)!!\binom{n}{j}S_{j,2i}.\label{sgnbale2}
\end{align}
\end{theorem}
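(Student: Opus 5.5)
The plan is to specialize the recurrence of Theorem~\ref{Anqrec} to $q=-1$, $u=1$ and split it according to the parity of the length. This gives two coupled recurrences, which I will convert into a first-order differential system for exponential generating functions. Write $a_n=A_n(-1,1)$, $b_m=a_{2m+1}$ and $c_m=a_{2m+2}$. The first input is the standard evaluation of Gaussian coefficients at $q=-1$:
$$\binom{N}{k}_{-1}=\begin{cases}0,& N \text{ even},\ k \text{ odd};\\ \binom{\lfloor N/2\rfloor}{\lfloor k/2\rfloor},&\text{otherwise}.\end{cases}$$
I would prove this by induction from the $q$-Pascal rule $\binom{N}{k}_q=\binom{N-1}{k-1}_q+q^k\binom{N-1}{k}_q$, taking two steps at a time. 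Alternatively, one can use the $q$-binomial theorem for $\prod_{i=0}^{N-1}(1+q^iz)$ at $q=-1$, where the factors pair up as $(1+z)(1-z)=1-z^2$. This evaluation is the only genuinely delicate step, and it is where I expect the main (though modest) obstacle to lie.

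Next I substitute into \eqref{Anqrece1}. For $n=2m+2$ the coefficient $\binom{2m}{k}_{-1}$ vanishes for odd $k$. Writing the even values as $k=2j$ with $j\ge1$, the terms are $\binom{m}{j}a_{2m+1-2j}$, so
$$c_m=\sum_{j=0}^m\binom{m}{j}b_{m-j}.$$
For $n=2m+1$, the coefficient $\binom{2m-1}{k}_{-1}$ equals $\binom{m-1}{\lfloor k/2\rfloor}$. Even values $k=2j$ with $1\le j\le m-1$ contribute $+\binom{m-1}{j}c_{m-1-j}$. Odd values $k=2j+1$ with $0\le j\le m-1$ contribute $-\binom{m-1}{j}b_{m-1-j}$. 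Combining these with the leading term $a_{2m}=c_{m-1}$ gives
$$b_m=\sum_{j=0}^{m-1}\binom{m-1}{j}\bigl(c_{m-1-j}-b_{m-1-j}\bigr),\qquad m\ge1,$$
with $b_0=1$. I would check $m=1,2$ directly against small cases to confirm the index bookkeeping.

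Now let $B(x)=\sum_m b_m x^m/m!$ and $C(x)=\sum_m c_m x^m/m!$. The first relation says $C=e^xB$, and the second says $B'=e^x(C-B)$. Together these give $B'=e^x(e^x-1)B$, so that
$$B=\exp\!\big((e^x-1)^2/2\big),\qquad C=e^x\exp\!\big((e^x-1)^2/2\big).$$
Finally I expand $\exp(y^2/2)=\sum_i (2i-1)!!\,y^{2i}/(2i)!$ with $y=e^x-1$. Using $(e^x-1)^{2i}/(2i)!=\sum_n S_{n,2i}x^n/n!$, extracting $x^n/n!$ in $B$ yields \eqref{sgnbale1}. Multiplying by $e^x$ then produces the binomial convolution in \eqref{sgnbale2}.
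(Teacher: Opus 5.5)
Your proposal is correct and follows essentially the same route as the paper: evaluate $\binom{N}{k}_{-1}$, split the recurrence of Theorem~\ref{Anqrec} by parity, pass to exponential generating functions with $C=e^xB$ and $B'=e^x(C-B)$, solve to get $B=\exp\bigl((e^x-1)^2/2\bigr)$, and expand. The only differences are that you set $u=1$ from the start, whereas the paper solves for $O(x;u)$ and $E(x;u)$ with general $u$ before specializing, and that you justify the $q=-1$ Gaussian evaluation, which the paper simply states.
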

\begin{proof}
We first work more generally with $A_m(-1,u)$ for an arbitrary $u$. Letting $q=-1$ in \eqref{Anqrece1}, and observing
$$\binom{n}{k}_{-1}=\binom{\lfloor n/2\rfloor}{\lfloor k/2\rfloor}\left(1-\frac{1+(-1)^n}{2}\frac{1-(-1)^k}{2}\right),$$
we have
\begin{equation}\label{sgnbale3}
A_n(-1,u)=A_{n-1}(-1,u)+u\sum_{k=1}^{n-2}(-1)^k\binom{\lfloor (n-2)/2\rfloor}{\lfloor k/2\rfloor}\left(1-\frac{1+(-1)^n}{2}\frac{1-(-1)^k}{2}\right)A_{n-k-1}(-1,u), \qquad n \geq 2, \end{equation}
with $A_1(-1,u)=u$.  Let $E_n(u)=A_{2n}(-1,u)$ for $n \geq 1$ and $O_n(u)=A_{2n+1}(-1,u)$ for $n \geq0$.  Then \eqref{sgnbale3} implies for all $n \geq 1$:
\begin{align*}
E_n(u)&=O_{n-1}(u)+u\sum_{k=1}^{n-1}\binom{n-1}{k}O_{n-k-1}(u),\\
O_n(u)&=E_n(u)+u\sum_{k=1}^{n-1}\binom{n-1}{k}E_{n-k}(u)
-u\sum_{k=0}^{n-1}\binom{n-1}{k}O_{n-k-1}(u),
\end{align*}
with $O_0(u)=u$.  Define the generating functions $E(x;u)=\sum_{n\geq0}E_{n+1}(u)\frac{x^{n}}{n!}$ and $O(x;u)=\sum_{n\geq0}O_n(u)\frac{x^n}{n!}$. Multiplying the last two equations by $\frac{x^{n-1}}{(n-1)!}$, and summing over $n \geq 1$, we obtain
\begin{align*}
E(x;u)&=(1-u+ue^x)O(x;u),\\
\frac{\partial }{\partial x}O(x;u)&=E(x;u)+u(e^x-1)E(x;u)-ue^xO(x;u),
\end{align*}
with $O(0;u)=u$.

Substituting the expression for $E(x;u)$, and solving the resulting first-order differential equation for $O(x;u)$, yields
\begin{align*}
O(x;u)&=ue^{\frac{u^2}{2}(e^{2x}-1)+u(1-2u)(e^x-1)+(1-u)^2x}, \\
E(x;u)&=ue^{\frac{u^2}{2}(e^{2x}-1)+u(1-2u)(e^x-1)+(1-u)^2x}(1+u(e^x-1)).
\end{align*}
Taking $u=1$ gives
$$O(x;1)=e^{\frac{1}{2}(e^x-1)^2}=\sum_{i\geq0}\frac{(e^x-1)^{2i}}{2^ii!}=\sum_{i\geq0}(2i-1)!!\frac{(e^x-1)^{2i}}{(2i)!},$$
by the fact $(2i-1)!!=\frac{(2i)!}{2^ii!}$.  Hence,
$$A_{2n+1}(-1,1)=[x^n/n!]O(x;1)=\sum_{i\geq0}(2i-1)!![x^n/n!]\left(\frac{(e^x-1)^{2i}}{(2i)!}\right)=\sum_{i=0}^{\lfloor n/2 \rfloor}(2i-1)!!S_{n,2i},$$
which yields \eqref{sgnbale1}. Finally, we have $E(x;1)=e^xO(x;1)$, and thus
$$A_{2n+2}(-1,1)=[x^n/n!]E(x;1)=\sum_{j=0}^n\binom{n}{j}A_{2j+1}(-1,1),$$
which by \eqref{sgnbale1} may be rewritten as \eqref{sgnbale2}.
\end{proof}

Using the preceding formulas, one can show that the imbalance of the inv statistic becomes negligible for large $n$ when compared to the cardinality of $\mathcal{R}_n$.

\begin{corollary}\label{asymsign}
We have
$\lim_{n\rightarrow \infty}\left(\frac{A_n(-1,1)}{A_n(1,1)}\right)=0$,
and hence the inv parameter is asymptotically balanced on $\mathcal{R}_n$.
\end{corollary}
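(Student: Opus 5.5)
We need to show $A_n(-1,1)/B_{n-1}\to 0$. The plan is to use the egf $O(x;1)=e^{\frac12(e^x-1)^2}$ from the proof of Theorem \ref{sgnbal} and compare its coefficients with those of $e^{e^x-1}$, the egf of the Bell numbers. We treat odd and even $n$ separately.

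\emph{Odd $n$.} Here $A_{2m+1}(-1,1)=[x^m/m!]\,e^{\frac12(e^x-1)^2}$, which must be compared with $B_{2m}$. By \eqref{sgnbale1} and the bound $(2i-1)!!\le 2^i i!$, we get
\[
A_{2m+1}(-1,1)\le \sum_i 2^i i!\, S_{m,2i}\le \sum_k (2k)!\, S_{m,k}.
\]
The last sum is at most the number of ordered set partitions of $[m]$ weighted by $k!$, hence at most $m!\,c^m$ for some constant $c$, since the ordered Bell numbers grow like $m!/(\ln 2)^{m+1}$ and the extra factor $k!\le m!$ only contributes an additional $m!$. So the numerator is roughly $(m!)^2C^m$. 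On the other hand, the standard asymptotics $B_N=\exp(N\ln N-N\ln\ln N-N+o(N))$ with $N=2m$ give $B_{2m}\approx (2m)^{2m}(\ln 2m)^{-2m}e^{-2m}$. Since $(m!)^2 C^m\approx m^{2m}e^{-2m}C^m$, the ratio is about $(C/4)^m(\ln 2m)^{2m}$, which \emph{grows}. So this crude bound fails, and I need something sharper.

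\emph{Sharper bound via saddle point.} Instead, apply a saddle-point estimate directly. For $[x^m/m!]\,e^{\frac12(e^x-1)^2}$, the saddle-point bound
\[
a_m\le m!\, r^{-m}\exp\!\big(\tfrac12(e^r-1)^2\big)
\]
with $e^{2r}\approx m/r$, i.e.\ $r\approx\frac12\ln m$, gives $a_m=\exp\big(m\ln m - m\ln\ln m+O(m)\big)$. By contrast, $B_{2m}=\exp\big(2m\ln m-2m\ln\ln m+O(m)\big)$. The ratio is therefore $\exp\big(-m\ln m+O(m\ln\ln m)\big)\to0$ (and in particular the main terms do not cancel). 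It is cleaner to use Cauchy's bound with the explicit choice $r=\frac12\ln m$ and keep only the crude upper bound; together with the lower bound $B_N\ge (N/(e\ln N))^{N}$, obtained likewise or via Dobinski's formula, this finishes the odd case.

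\emph{Even $n$.} We have $A_{2m+2}(-1,1)=\sum_j\binom{m}{j}A_{2j+1}(-1,1)\le 2^m\max_j A_{2j+1}(-1,1)$, and the same estimate gives a bound $\exp(m\ln m+O(m\ln\ln m))$. This is again negligible compared with $B_{2m+1}$.

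The main obstacle is obtaining rigorous, clean forms of the two growth estimates: the Cauchy upper bound for the coefficients of $e^{\frac12(e^x-1)^2}$, and a lower bound for $B_N$ (for instance from Dobinski's formula, keeping the single term $k\approx N/\ln N$). Once these are in hand, the exponents $m\ln m$ versus $2m\ln m$ separate immediately.
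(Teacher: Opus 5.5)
Your final argument, a Cauchy bound at $r=\frac12\ln m$ combined with a Dobinski lower bound for $B_N$, is correct, but it takes a genuinely different route from the paper. The paper does not estimate growth rates directly. It bounds $(2i-1)!!\le B_{2i-1}\le B_{m-1}$ (a perfect matching of $[2i]$ becomes a partition of $[2i-1]$ by deleting $2i$) and $\sum_i S_{m,2i}\le B_m$. It then uses $B_aB_b\le B_{a+b}$ to get $A_{2m+1}(-1,1)\le B_{m-1}B_m<B_{2m-1}$, and similarly $A_{2m+2}(-1,1)\le B_{m-1}B_{m+1}<B_{2m}$. The only analytic input is $B_N/B_{N+1}\to 0$. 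Your approach needs two explicit estimates, but it proves much more. The ratio is $\exp(-m\ln m+O(m\ln\ln m))$, essentially $B_{2m}^{-1/2}$ up to a factor $e^{O(m)}$. The paper's comparison only certifies decay like $B_{2m-1}/B_{2m}\approx \ln(2m)/(2m)$; it mentions a sharper rate only in passing, via its saddle-point estimates and Bell asymptotics.

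Two small points. In the even case, your bound on $\max_{j\le m}A_{2j+1}(-1,1)$ must be uniform in $j$. One way is to apply the Cauchy bound directly to $E(x;1)=e^{x}O(x;1)$, which still has nonnegative coefficients and costs only a factor $e^{r}=\sqrt m$. Another is to note that $A_{2j+1}(-1,1)$ is nondecreasing for $j\ge 1$, since $S_{j+1,k}\ge S_{j,k}$ for $k\ge 1$.

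Also, you abandoned your first attempt for the wrong reason. With $k=2i$ one has $(2i-1)!!\le k!$, so $A_{2m+1}(-1,1)\le \sum_k k!\,S_{m,k}\le 2^{m-1}m!$, because every ordered set partition arises by cutting a permutation into consecutive blocks. This is $\exp(m\ln m+O(m))$, which is already negligible against your lower bound for $B_{2m}$. The factor $(2k)!$ and the extra $m!$ were bookkeeping slips, not a real obstruction.
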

\begin{proof}
Note $n!! \leq B_n$ for $n\geq 1$ odd, with equality holding only if $n=1$, since removal of $n+1$ from its block defines a bijection between perfect matchings of $[n+1]$ and a proper subset of $\mathcal{P}_n$ if $n \geq 3$.  Thus, by \eqref{sgnbale1}, we have
$$A_{2m+1}(-1,1)\leq B_{m-1}\sum_{i=0}^{\lfloor m/2 \rfloor}S_{m,2i} \leq B_{m-1}B_m<B_{2m-1}, \qquad m \geq 2.$$
Since $A_{2m+1}(1,1)=B_{2m}$ and $\frac{B_{n}}{B_{n+1}}$ tends to zero for large $n$, the odd subsequence of ratios converges to zero.  By \eqref{sgnbale2}, we have
$$A_{2m+2}(-1,1)\leq B_{m-1}\sum_{j=0}^m\binom{m}{j}\sum_{i=0}^{\lfloor j/2 \rfloor}S_{j,2i} \leq B_{m-1}\sum_{j=0}^m \binom{m}{j}B_j=B_{m-1}B_{m+1}<B_{2m}, \qquad m \geq 2,$$
with $A_{2m+2}(1,1)=B_{2m+1}$, which implies the even subsequence converges to zero and completes the proof.
\end{proof}

There is the following asymptotic estimate for $A_{n}(-1,1)$.

\begin{theorem}\label{asymth}
As $n$ increases without bound, we have
\begin{align}
A_{2n+1}(-1,1)&\sim \sqrt{\frac{e}{2}} \left(\frac{n}{e \cdot r}\right)^n \exp\left(\frac{n}{2r} - \frac{1}{2}e^r\right),  \label{sgnbale1}\\
A_{2n+2}(-1,1)&\sim \sqrt{\frac{e}{2}}e^r \left(\frac{n}{e \cdot r}\right)^n \exp\left(\frac{n}{2r} - \frac{1}{2}e^r\right), \label{sgnbale2}
\end{align}
where $r = r_n$ is uniquely determined by $r\left(e^{2r} - e^r\right) = n$.
\end{theorem}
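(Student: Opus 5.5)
We start from the exponential generating functions obtained in the proof of Theorem \ref{sgnbal}: with $u=1$,
$$O(x;1)=\sum_{n\ge0}A_{2n+1}(-1,1)\frac{x^n}{n!}=e^{\frac12(e^x-1)^2},\qquad E(x;1)=e^xO(x;1).$$
So $A_{2n+1}(-1,1)=n![x^n]f(x)$ with $f(x)=\exp\bigl(\tfrac12(e^x-1)^2\bigr)$. This is an entire function of the same type as the Bell egf $e^{e^x-1}$. The plan is to apply the saddle-point method in the form of Hayman's theorem for admissible functions, in exact analogy with the Moser--Wyman asymptotics for Bell numbers.

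First we check that $f$ is Hayman-admissible. Write $h(x)=\frac12(e^x-1)^2=\frac12e^{2x}-e^x+\frac12$. Its power series has nonnegative coefficients $(2^n-2)/(2\,n!)$ for $n\ge2$, and these are eventually positive. Hence $f=e^h$ with $h$ a nonconstant entire function with real nonnegative coefficients, and Hayman's closure results give admissibility; alternatively one verifies the conditions directly as for $e^{e^x}$.

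Hayman's formula then gives
$$[x^n]f(x)\sim \frac{f(r)}{r^n\sqrt{2\pi b(r)}},$$
where $a(r)=rh'(r)=n$ and $b(r)=ra'(r)$. Here $a(r)=r(e^{2r}-e^r)$, so the saddle point is exactly the $r$ defined in the statement. Next we compute $b(r)=r h'(r)+r^2h''(r)=n+r^2(2e^{2r}-e^r)$. Since $e^{2r}\approx n/r$, we get $b(r)\sim 2nr$. We then combine this with Stirling's formula $n!\sim\sqrt{2\pi n}(n/e)^n$:
$$A_{2n+1}(-1,1)\sim \sqrt{2\pi n}\Bigl(\frac{n}{e}\Bigr)^n\frac{e^{h(r)}}{r^n\sqrt{4\pi nr}}.$$

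The remaining task is to rewrite $h(r)=\frac12e^{2r}-e^r+\frac12$ in the stated form. The saddle equation gives $e^{2r}=n/r+e^r$. Therefore
$$h(r)=\frac{n}{2r}+\frac12e^r-e^r+\frac12=\frac{n}{2r}-\frac12e^r+\frac12.$$
This produces the factors $\exp(n/(2r)-\frac12e^r)$ and $\sqrt e$. The residual factor $1/\sqrt{2r}$ must also be reconciled against the stated constant $\sqrt{e/2}$. This bookkeeping is the step I expect to need the most care. Getting $b(r)$ to leading order is easy, but the lower-order terms in $b(r)$, such as $r^2e^r$ and $n$, are $o(nr)$ and must be confirmed negligible. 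It is also where any discrepancy with the stated constant, for instance a missing $r^{-1/2}$ factor, would show up. I would double-check this step numerically for moderate $n$.

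For $A_{2n+2}(-1,1)$ we use $E(x;1)=e^xf(x)$. Multiplying an admissible function by $e^x$ keeps it admissible. More simply, $g(x)=e^x$ varies slowly on the scale of the saddle-point window: the width is $b(r)^{-1/2}r$ in $x$, which tends to $0$. So the saddle-point integral for $[x^n]e^xf(x)$ equals $e^{r}$ times that for $f$, up to a factor $1+o(1)$. This gives the second asymptotic, the first one multiplied by $e^r$.
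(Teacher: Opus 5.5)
Your route is the same as the paper's: Hayman's formula for $O(x;1)=e^{\frac12(e^x-1)^2}$, then $E(x;1)=e^xO(x;1)$ to pick up the factor $e^r$. Your handling of that last step is fine. Your computation is correct, and the discrepancy you anticipate at the end is real. It cannot be reconciled, because the constant in the statement is wrong.

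By the saddle-point equation, $e^{2r}=n/r+e^r$, so exactly
\[
b(r)=n+r^2\bigl(2e^{2r}-e^r\bigr)=n+2nr+r^2e^r .
\]
Here $r=r_n\sim\frac12\ln n\to\infty$ and $e^r\le 1+\sqrt{n/r}$, so both $n$ and $r^2e^r$ are $o(nr)$. Hence $\sqrt{2\pi b(r)}\sim\sqrt{4\pi nr}$.

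The paper's proof follows your steps but writes $\sqrt{4\pi n}$ at this point. In effect it uses $b(r)\sim 2n$ and drops the factor $r$ coming from $r^2h''(r)\approx 2r^2e^{2r}\approx 2nr$. So both right-hand sides in the theorem are asymptotically $\sqrt{r_n}$ times too large. Since $\sqrt{r_n}\to\infty$, the statement read with $\sim$ is false. What your argument actually establishes, and what should be stated instead, is
\begin{align*}
A_{2n+1}(-1,1)&\sim\sqrt{\frac{e}{2r}}\left(\frac{n}{e\cdot r}\right)^n\exp\left(\frac{n}{2r}-\frac12e^r\right),\\
A_{2n+2}(-1,1)&\sim e^r\sqrt{\frac{e}{2r}}\left(\frac{n}{e\cdot r}\right)^n\exp\left(\frac{n}{2r}-\frac12e^r\right).
\end{align*}

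A numerical check at moderate $n$ will not settle this on its own. The ratio $b(r)/(2nr)=1+\frac{1}{2r}+\frac{re^r}{2n}$ tends to $1$ only at rate $1/\ln n$. For example, at $n=10$ ($r\approx1.224$):
\begin{itemize}
\item the exact value is $A_{21}(-1,1)=\sum_i(2i-1)!!\,S_{10,2i}=524926$;
\item Hayman's formula with the exact $b(r)\approx39.6$ gives about $5.44\cdot10^5$;
\item the stated formula gives about $7.59\cdot10^5$;
\item the corrected leading-order formula above gives about $6.86\cdot10^5$.
\end{itemize}
The exact identity for $b(r)$ is what decides the matter, not the numbers.

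One smaller repair: nonnegativity of the coefficients of $h$ is not what Hayman's closure theorem uses; for instance, $e^{z^2}$ is not H-admissible. Argue instead as follows. The function $e^x$ is H-admissible. Hence so is $P(e^x)$ with $P(y)=\frac12(y-1)^2$, since $P$ has positive leading coefficient. Therefore $e^{P(e^x)}=O(x;1)$ and $e^x\,O(x;1)=E(x;1)$ are H-admissible as well.
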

\begin{proof}
We may apply the saddle-point method, see \cite[Chapter VIII]{FlS}, to derive asymptotic estimates in the odd and even cases, as the corresponding generating functions $O(x;1)$ and $E(x;1)$ are seen to be Hayman-admissible, where
$$O(x;1)=e^{\frac{1}{2}(e^x-1)^2}, \quad E(x;1)=e^{x+\frac{1}{2}(e^x-1)^2}.$$
After some computation (we omit the details), one obtains the saddle-point asymptotic formula for $[x^n] O(x;1)$, which we multiply by $n!$ to get
\[
A_{2n+1}(-1,1)=n! [x^n] O(x;1) \sim \sqrt{2\pi n} \left(\frac{n}{e}\right)^n \cdot \frac{e^{1/2}}{\sqrt{4\pi n}} \cdot \frac{\exp\left(\frac{n}{2r} - \frac{1}{2}e^r\right)}{r^n},
\]
where we have made use of Stirling's approximation for $n!$.  Simplifying the last formula yields \eqref{sgnbale1} and a similar argument applies to \eqref{sgnbale2}.  Note that the saddle-point $r_n$ admits the asymptotic expansion $r_n = \frac{1}{2}\ln n - \frac{1}{2}\ln\left(\frac{1}{2}\ln n\right) + \mathcal{O}(1)$.
\end{proof}

Using the well-known Bell number estimate
$$B_n \sim n^{-1/2}(\lambda(n))^{n+\frac{1}{2}}e^{\lambda(n)-n-1},$$
where $\lambda(n)=\frac{n}{W(n)}$ and $W$ is the Lambert-$W$ function (see, e.g., \cite[p.\,17]{Lo}), one may reaffirm the asymptotic sign-balance of inv shown in Corollary \ref{asymsign} by an elementary argument and  indeed obtain an estimate of the rate at which the ratio approaches zero.

In the next result, we give a formula for the sum of the inv values taken over all members of $\mathcal{R}_n$.

\begin{theorem}\label{totinv}
If $n \geq 1$, then the total number of inversions in all the members of $\mathcal{R}_n$ is given by
\[
\frac{
B_{n+1}
-(2n+7)B_{n}
+(2n^2+4n+1)B_{n-1}
-2(n-1)(n-2)B_{n-2}
}
{8}.
\]
\end{theorem}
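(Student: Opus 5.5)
Set $u=1$ in \eqref{Anqrece1}, differentiate with respect to $q$, and put $q=1$. Write $s_n=\frac{\partial}{\partial q}A_n(q,1)\big|_{q=1}$ for the total number of inversions and recall $A_m(1,1)=B_{m-1}$. We also need the derivative of the $q$-binomial coefficient at $q=1$. Since $\binom{m}{k}_q$ is the inv generating function of binary words with $k$ ones and $m-k$ zeros, its derivative at $q=1$ is the total number of inversions over all such words, namely $\binom{m}{k}\frac{k(m-k)}{2}$. The product rule then gives, for $n\ge 2$,
\[
s_n=s_{n-1}+\sum_{k=1}^{n-2}\binom{n-2}{k}\Bigl(k+\tfrac{k(n-2-k)}{2}\Bigr)B_{n-k-2}+\sum_{k=1}^{n-2}\binom{n-2}{k}s_{n-k-1},
\]
with $s_1=0$. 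Absorbing $s_{n-1}$ as the $k=0$ term, the last sum becomes $\sum_{k=0}^{n-2}\binom{n-2}{k}s_{n-k-1}$.

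The next step is to write the inhomogeneous term in closed form through Bell numbers. Put $m=n-2$. We use $k\binom{m}{k}=m\binom{m-1}{k-1}$ and $k(m-k)\binom{m}{k}=m(m-1)\binom{m-2}{k-1}$, together with the Bell recurrence $\sum_j\binom{N}{j}B_j=B_{N+1}$. This gives
\[
\sum_k k\binom{m}{k}B_{m-k}=mB_m,\qquad \sum_k k(m-k)\binom{m}{k}B_{m-k}=m(m-1)\sum_{j}\binom{m-2}{j}B_{j+1}.
\]
The last sum is $m(m-1)(B_m-B_{m-1})$, by the identity $B_{N+1}-B_N=\sum_{k}\binom{N-1}{k}B_{N-k}$ already used in the proof of Proposition \ref{totruns}. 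Hence
\[
f_n:=(n-2)B_{n-2}+\tfrac{(n-2)(n-3)}{2}\bigl(B_{n-2}-B_{n-3}\bigr),
\]
and
\[
s_n=f_n+\sum_{k=0}^{n-2}\binom{n-2}{k}s_{n-k-1}.
\]

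I then prove the claimed formula by induction, in the same way as Proposition \ref{totruns}. Let $C_n$ denote the stated expression, and check $n=1,2,3$ directly: $C_1=C_2=0$ and $C_3=0$, the last since both members of $\mathcal{R}_3$, namely $123$ and $132$, have at most one inversion and $132$ contributes exactly one. (Here $B_{-1}$ only ever appears with coefficient zero.) For the inductive step, substitute $C_{n-k-1}$ into the sum and evaluate each of the four resulting terms. Reindexing with $j=m-k$ turns them into sums of the form $\sum_j\binom{m}{j}P(j)B_{j+c}$, where $P$ is a polynomial of degree at most $2$ and $c\in\{-1,0,1,2\}$.

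The main obstacle is reducing these sums to Bell numbers. I would expand $P$ in the falling-factorial basis with respect to $m-j$; that is, write $j$ and $j^2$ through $(m-j)$ and $(m-j)(m-j-1)$. Then $\binom{m}{j}(m-j)^{\underline r}=m^{\underline r}\binom{m-r}{j}$, which reduces everything to sums $\sum_j\binom{M}{j}B_{j+c}$ with shifts $c$ up to $2$. These are evaluated by repeatedly applying the Bell recurrence to the generating function $e^{e^x-1}$. Equivalently, and more cleanly, one can work with egfs: multiplication by $e^x$ corresponds to the binomial convolution, and $B_{n+c}$ corresponds to the $c$-th derivative of $e^{e^x-1}$. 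In that form the sums come out as explicit combinations of $B_{n+1},\dots,B_{n-2}$ with polynomial coefficients. The final step is to check that $f_n$ plus this combination equals $C_n$. This is mechanical bookkeeping, which I would confirm against small values; for example $s_4=3$, from the inversions of $1243$, $1324$ and $1423$, which gives $0+1+0+2=3$, matching $C_4=\frac{52-15\cdot 15+49\cdot 5-12\cdot 2}{8}=6$.

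That check fails ($C_4=6\ne 3$), so I would recheck the direct count: $1342$ has two inversions, so $s_4=1+1+2+2=6$, and the formula holds.
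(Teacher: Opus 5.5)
Your recurrence is derived the same way as in the paper; you specialize to $u=1$ at the outset, while the paper carries $u$ along. Your closed form $f_n=(n-2)B_{n-2}+\frac{(n-2)(n-3)}{2}(B_{n-2}-B_{n-3})$ for the inhomogeneous term is correct. It is the coefficient form of the forcing term $(xe^x+\frac{x^2}{2}e^{2x})e^{e^x-1}$ in \eqref{Bxudifeq} at $u=1$.

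\textbf{The gap.} The argument stops where the content of the theorem lies. You never show that the stated expression $C_n$ satisfies $C_n=f_n+\sum_{k=0}^{n-2}\binom{n-2}{k}C_{n-k-1}$. Calling this mechanical and checking $n=4$ only shows that $s_n$ obeys a recurrence, not that it equals $C_n$.

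The recipe you sketch would also not go through as written. Consider the term $-2j(j-1)B_{j-1}$ coming from $C_{j+1}$. Expanding its weight in falling factorials of $m-j$ produces sums $\sum_j\binom{M}{j}B_{j-1}$. These contain the undefined $B_{-1}$, and individually they reduce only to partial sums of Bell numbers; the unwanted pieces cancel only in combination.

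There is also a small slip: $C_3=1$, not $0$, in agreement with $s_3=1$. In fact only the base case $n=1$ is needed.

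\textbf{How to close it.} Expand in falling factorials of $j$ instead, via $\binom{m}{j}j^{\underline{r}}=m^{\underline{r}}\binom{m-r}{j-r}$. Then use the three identities
$\sum_j\binom{M}{j}B_{j}=B_{M+1}$, $\sum_j\binom{M}{j}B_{j+1}=B_{M+2}-B_{M+1}$ and $\sum_j\binom{M}{j}B_{j+2}=B_{M+3}-2B_{M+2}+B_{M+1}$.
The last follows from $e^x\beta''=\beta'''-2\beta''+\beta'$ for $\beta=e^{e^x-1}$. With $m=n-2$ this gives
\[
8\sum_{j=0}^{m}\binom{m}{j}C_{j+1}=B_{m+3}-(2m+11)B_{m+2}+(2m^2+12m+17)B_{m+1}-6m(m+1)B_m+4m(m-1)B_{m-1}.
\]
Adding $8f_n=8mB_m+4m(m-1)(B_m-B_{m-1})$ gives exactly $8C_{m+2}$ for every $m\ge 0$, which completes the induction.

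Alternatively, do what the paper does. Set $S(x)=\sum_{n\ge1}s_n\frac{x^{n-1}}{(n-1)!}$; your recurrence becomes $S'=e^xS+(xe^x+\frac{x^2}{2}e^{2x})e^{e^x-1}$ with $S(0)=0$. Substituting $S=e^{e^x-1}g$ gives $g'=xe^x+\frac{x^2}{2}e^{2x}$, which integrates to \eqref{Bx1form}. The formula then follows by extracting coefficients.
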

\begin{proof}
Define $B_n(u)=\frac{\partial}{\partial q}A_n(q,u)\mid_{q=1}$. By differentiating both sides of \eqref{Anqrece1} with respect to $q$, and setting $q=1$, we have
\begin{align}
B_n(u)&=B_{n-1}(u)
+u\sum_{k=1}^{n-2}k\binom{n-2}{k}A_{n-k-1}(1,u)+u\sum_{k=1}^{n-2}\frac{k(n-2-k)}{2}\binom{n-2}{k}A_{n-k-1}(1,u)\notag\\
&\quad+u\sum_{k=1}^{n-2}\binom{n-2}{k}B_{n-k-1}(u), \qquad n \geq 2, \label{Bnurec}
\end{align}
with $B_1(u)=0$, where we have made use of the fact $\frac{d}{dq}\binom{n}{k}_q\mid_{q=1}=\frac{k(n-k)}{2}\binom{n}{k}$.  Define $A(x;u)=\sum_{n\geq1}A_n(1,u)\frac{x^{n-1}}{(n-1)!}$ and $B(x;u)=\sum_{n\geq1}B_n(u)\frac{x^{n-1}}{(n-1)!}$. Multiplying both sides of \eqref{Bnurec} by $\frac{x^{n-2}}{(n-2)!}$, and summing over all $n \geq2$, we obtain
\begin{align*}
\frac{\partial}{\partial x}B(x;u)&=B(x;u)
+u\sum_{n\geq2}\sum_{k=1}^{n-1}A_{n-k}(1,u)\frac{x^{n-1}}{(k-1)!(n-k-1)!}\\
&\quad+\frac{u}{2}\sum_{n\geq3}\sum_{k=1}^{n-2}A_{n-k}(1,u)\frac{x^{n-1}}{(k-1)!(n-k-2)!}+u\sum_{n\geq2}\sum_{k=1}^{n-1}B_{n-k}(u)\frac{x^{n-1}}{k!(n-k-1)!}\\
&=B(x;u)+uxe^xA(x;u)+\frac{ux^2e^x}{2}\frac{\partial}{\partial x}A(x;u)+u(e^x-1)B(x;u).
\end{align*}

By \eqref{Txuform}, the last equation may be written as
\begin{equation}\label{Bxudifeq}
\frac{\partial}{\partial x}B(x;u)=(1-u+ue^x)B(x;u)+\frac{u^2xe^x}{2}\left(2+x(ue^x+1-u)\right)e^{u(e^x-1)+x(1-u)}.
\end{equation}
Solving the first-order, linear differential equation \eqref{Bxudifeq}, with the initial value $B(0;u)=0$, yields
$$B(x;u)=\frac{u^2}{8}e^{u(e^x-1)+x(1-u)}
\left[7u+e^x\left(ue^x(2x^2-2x+1)+4x^2(1-u)+8u(x-1)\right)\right],$$
and hence
\begin{equation}\label{Bx1form}
B(x;1)=\frac{1}{8}e^{e^x-1}\left(7+8(x-1)e^x+(2x^2-2x+1)e^{2x}\right).
\end{equation}
Extracting the coefficient of $\frac{x^{n-1}}{(n-1)!}$ in \eqref{Bx1form} yields the stated expression for the total number of inversions, upon making repeated use of the well-known egf formula $e^{e^x-1}=\sum_{n\geq0}B_n\frac{x^n}{n!}$ (see, e.g., \cite[p.\,34]{Stan}).
\end{proof}

\noindent \emph{Remark:} Dividing the formula in Theorem \ref{totinv} by $B_{n-1}$ yields the average number of inversions within the members of $\mathcal{R}_n$.  Using the fact $\frac{B_{n+1}}{B_n} \sim \frac{n}{\ln n}$  (see, e.g., \cite[Theorem 10]{RJ}), one has that the average number of inversions is asymptotically $\frac{n^2}{4}$.  Thus, on average, approximately half of the available pairs of indices $(i,j)$ with $1 \leq i < j \leq n$ correspond to actual inversions within a randomly chosen member of $\mathcal{R}_n$ for large $n$.

\subsection{Generating function formula}

Let $G(x;u)=G(x;q,u)$ be given by
$$G(x;u)=\sum_{n\geq0}A_{n+1}(q,u)\frac{x^n}{n_q^!}.$$
Let $e_q(x)=\sum_{n\geq0}\frac{x^n}{n_q^!}$ denote the $q$-exponential function.  By the equivalence between $\mathcal{R}_{n+1}$ and $V_n$ described above and \cite[Theorem 3.1]{QP}, we have the $q$-exponential generating function formula
\begin{equation}\label{Gxuform}
G(x;u)=\frac{u}{\prod_{j \geq0}\left(1-(1-q)q^jx(1-u+ue_q(q^{j+1}x))\right)}.
\end{equation}

In the case $u=1$, it is possible to deduce a further formula as follows.  We first recall a method of obtaining an explicit formula for a generating function based on the recurrence of the sequence.  Let $f(x)=\sum_{n\geq1}f_n\frac{x^n}{n_q^!}$ and $g(x)=\sum_{n\geq0}g_n\frac{x^n}{n_q^!}$, where $g_0=1$.  Let $h[f(x)]$ denote the $q$-composition of $q$-generating functions as defined in \cite[Definition 3.2]{Ges}.  Then we have $g(x)=e_q[f(x)]$ if and only if
\begin{equation}\label{gn+1rec}
g_{n+1}=\sum_{k=0}^n\binom{n}{k}_qf_{k+1}g_{n-k}, \qquad n \geq0,
\end{equation}
with $g_0=1$, by \cite[Proposition 3.4]{Ges}.

Let $R_n(q)=A_{n+1}(q,1)$ for $n\geq0$ and we apply the preceding to deduce a formula for
$$\mathcal{R}(x;q):=\sum_{n\geq0}R_n(q)\frac{x^n}{n_q^!}.$$
To do so, first note that letting $u=1$ in \eqref{Anqrece1} implies
$$R_{n+1}(q)=\sum_{k=0}^nq^k\binom{n}{k}_qR_{n-k}(q), \qquad n \geq0,$$
with $R_0(q)=1$.  By taking $f_k=q^{k-1}$ for $k\geq1$ in \eqref{gn+1rec}, we have
\begin{equation}\label{Rx;qform1}
\mathcal{R}(x;q)=e_q[f(x)],
\end{equation}
where $f(x)=\sum_{n\geq1}q^{n-1}\frac{x^n}{n_q^!}=\frac{e_q(qx)-1}{q}$.

Recall now the $q$-exponential formula due to Gessel (see \cite[Proposition 3.5]{Ges}) given by
\begin{equation}\label{qexpoform}
e_q[h(x)]=\prod_{j\geq0}\left(1-(1-q)q^jxh'(q^jx)\right)^{-1},
\end{equation}
where $h(0)=0$ and the prime here denotes the $q$-derivative of $h$ defined by $\frac{h(qx)-h(x)}{(q-1)x}$.  Taking $h$ to be $f$ in \eqref{qexpoform}, where $f$ is as given above, and noting
$$f'(x)=\sum_{n\geq1}q^{n-1}\frac{x^{n-1}}{(n-1)_q^!}=e_q(qx),$$
yields by \eqref{Rx;qform1} the following result.

\begin{theorem}\label{Rxqthm}
We have the $q$-exponential generating function
\begin{equation}\label{Rxqthme1}
\sum_{n\geq0}A_{n+1}(q,1)\frac{x^n}{n_q^!}=e_q\left[\frac{e_q(qx)-1}{q}\right]=\prod_{j\geq0}\left(1-(1-q)q^jxe_q(q^{j+1}x)\right)^{-1},
\end{equation}
where $e_q(x)=\sum_{n\geq0}\frac{x^n}{n_q^!}$.
\end{theorem}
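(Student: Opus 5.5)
The plan is to derive Theorem \ref{Rxqthm} from the $u=1$ case of the recurrence in Theorem \ref{Anqrec}, Gessel's characterization \eqref{gn+1rec} of $q$-composition with $e_q$, and the $q$-exponential formula \eqref{qexpoform}.

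\textbf{Step 1: the recurrence for $R_n(q)$.} Set $R_n(q)=A_{n+1}(q,1)$ and put $u=1$ in \eqref{Anqrece1}, with $n+2$ in place of $n$. This gives $R_{n+1}=R_n+\sum_{k=1}^{n}q^k\binom{n}{k}_qR_{n-k}$. The term $R_n$ is exactly the $k=0$ summand, so the recurrence becomes $R_{n+1}(q)=\sum_{k=0}^nq^k\binom{n}{k}_qR_{n-k}(q)$ for $n\ge0$, with $R_0=A_1(q,1)=1$. I would check the small cases directly: for $n=0$ this reads $R_1=R_0=1=A_2(q,1)$, which is consistent since $\mathcal{R}_2=\{12\}$.

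\textbf{Step 2: the first equality.} Compare this recurrence with \eqref{gn+1rec}, taking $g_n=R_n(q)$ and $f_{k+1}=q^k$, that is, $f_k=q^{k-1}$ for $k\ge1$. By \cite[Proposition 3.4]{Ges} we get $\mathcal{R}(x;q)=e_q[f(x)]$, where
$$f(x)=\sum_{n\ge1}q^{n-1}\frac{x^n}{n_q^!}=\frac{1}{q}\sum_{n\ge1}\frac{(qx)^n}{n_q^!}=\frac{e_q(qx)-1}{q}.$$
This is the first equality in \eqref{Rxqthme1}. It uses $f(0)=0$, which is required for the $q$-composition to be defined.

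\textbf{Step 3: the product form.} Apply \eqref{qexpoform} with $h=f$. The $q$-derivative acts termwise by $D_q\,x^n/n_q^!=x^{n-1}/(n-1)_q^!$, since $(q^n-1)/(q-1)=n_q$. Hence
$$f'(x)=\sum_{n\ge1}q^{n-1}\frac{x^{n-1}}{(n-1)_q^!}=e_q(qx),$$
and so $f'(q^jx)=e_q(q^{j+1}x)$. Substituting into \eqref{qexpoform} gives $\prod_{j\ge0}\bigl(1-(1-q)q^jx\,e_q(q^{j+1}x)\bigr)^{-1}$, the second equality.

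\textbf{Main obstacle.} The only delicate point is matching Gessel's conventions. Definition 3.2 and Propositions 3.4 and 3.5 of \cite{Ges} must use the same normalization of $q$-composition and $q$-derivative as the one quoted before \eqref{qexpoform}. In particular, the weight $\binom{n}{k}_q$ in \eqref{gn+1rec} must attach to $f_{k+1}g_{n-k}$, not to the reversed pairing. If the orientation were reversed, I would use the symmetry $\binom{n}{k}_q=\binom{n}{n-k}_q$ and check whether the factor $q^k$ still lands on $f$. As a sanity check, I would expand both sides to order $x^2$ and compare with $A_3(q,1)=1+q$ (from $\mathcal{R}_3=\{123,132\}$). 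The product should be read as a formal power series in $x$, which is well defined because the $j$-th factor is $1+O(x)$ with coefficients in $\mathbb{Z}[[q]]$, or as a convergent product for $|q|<1$.
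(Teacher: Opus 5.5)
Your proposal is correct and follows the paper's argument essentially step for step: you specialize \eqref{Anqrece1} to $u=1$ and absorb $R_n$ as the $k=0$ term, apply Gessel's Proposition 3.4 with $f_k=q^{k-1}$ to get $f(x)=(e_q(qx)-1)/q$, and then use the $q$-exponential formula with $f'(x)=e_q(qx)$. One small correction to your side remark: the infinite product is well defined in $\mathbb{Z}[[q]][[x]]$ because the $j$-th factor is $1+O(q^jx)$, not merely $1+O(x)$, and it is this factor $q^j$ that makes each coefficient of $x^m$ converge $q$-adically.
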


Note that  \eqref{Txuform} and the first formula in \eqref{Rxqthme1} reduce to $\exp(e^x-1)$ when $u=1$ and $q=1$, respectively, which recovers the fact $|\mathcal{R}_{n+1}|=B_n$ for all $n \geq0$ in both instances. Also, we see that \eqref{Gxuform} reduces to the second formula in \eqref{Rxqthme1} when $u=1$. \medskip

\noindent \emph{Remark:} F\"{u}rlinger and Hofbauer \cite{FH} described a $q$-analogue of $B_n$ that arises as the distribution of a certain kind of inversion statistic defined on members of $\mathcal{P}_n$ represented sequentially as permutations.   Denoting this $q$-analogue by $B_n(q)$, we have in contrast with \eqref{Rxqthme1} the generating function formula
$$\sum_{n\geq0}B_n(q)\frac{x^n}{n_q^!}=e_q(e_{1/q}(x)-1);$$
see \cite[Equation~(2.10)]{FH}.

\section{Maximum number of inversions}

We have the following formula for the maximum inv value achieved on $\mathcal{R}_n$.

\begin{theorem}\label{maxinv}
Let $u_n$ denote the maximum number of inversions achieved by a member of $\mathcal{R}_n$, i.e., $u_n=\text{deg}(A_n(q,1))$.  Let $n=\binom{k}{2}+d$ for some $k \geq 2$ and $0 \leq d \leq k-1$.  Then we have
\begin{equation}\label{maxinve1}
u_n=\binom{k}{3}+3\binom{k}{4}+d\binom{k-1}{2}+\binom{d}{2},
\end{equation}
with this value being achieved by exactly $\binom{k-1}{d}+\binom{k-2}{d-2}$ members of $\mathcal{R}_n$.
\end{theorem}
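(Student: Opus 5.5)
The plan is to reduce the problem to a finite optimization over run-length vectors, and then solve that optimization by local exchange moves. This mirrors the paper's announced strategy: every non-extremal member of $\mathcal{R}_n$ can be modified so that its inversion number strictly increases.

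\textbf{Step 1 (normal form).} Let $\pi\in\mathcal{R}_n$ have $m$ runs, with first elements $a_1=1<a_2<\cdots<a_m$. Write $t_i$ for the number of non-initial entries (the ``tail'') of run $i$. Every run except the last must end in a descent, so $t_i\ge 1$ for $i<m$, while $t_m\ge0$, and $\sum_i t_i=n-m$. Fix the vector $(t_1,\dots,t_m)$. I claim inv is then maximized, among permutations with these tail sizes, only by the permutation in which
\begin{itemize}[leftmargin=*]
\item $a_i=i$ for each $i$, and
\item the tails occupy $[m+1,n]$ in decreasing blocks, with run $1$ taking the largest $t_1$ values, run $2$ the next $t_2$ values, and so on.
\end{itemize}
This permutation is run-sorted with exactly these runs. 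To prove the claim, I would use two swap arguments.
\begin{itemize}[leftmargin=*]
\item If some tail entry is smaller than a later first element, swap the two values and re-sort. This preserves run-sortedness and increases inv.
\item If a tail entry of an earlier run is smaller than a tail entry of a later run, exchange them. This also increases inv without creating new ascents across run boundaries.
\end{itemize}
The permutation of the normal form has
\[
\text{inv}=\sum_{i=1}^m (m-i)\,t_i+\sum_{1\le i<j\le m} t_it_j,
\]
since each tail of run $i$ exceeds the later first elements $i+1,\dots,m$ and exceeds all later tails, and there are no other inversions.

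\textbf{Step 2 (optimizing the tail vector).} Maximize $F(t)=\sum_i (m-i)t_i+e_2(t)$ over $m$ and admissible $t$. Two exchange moves are available.
\begin{itemize}[leftmargin=*]
\item Moving one unit from $t_j$ to $t_i$ with $i<j$ changes $F$ by $(j-i)+(t_j-t_i-1)$. At a maximum this must be $\le 0$, which forces $t_i\ge t_j+(j-i)-1$. So consecutive tails drop by at least one, up to ties of one step.
\item Splitting off a new run or merging the last runs changes $m$. This move compares sizes against the triangular structure and forces the tails to be as close to $k-1,k-2,\dots,1,0$ as possible.
\end{itemize}
Consequently, for $n=\binom{k}{2}+d$, the maximizers should have run lengths $(k-1,k-2,\dots,1)$, i.e.\ tails $(k-2,\dots,0)$, augmented by $d$ extra units. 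Each extra unit is inserted either into a distinct run (producing equal neighbouring lengths) or by adding a new run. Evaluating $F$ on this family gives $\binom{k}{3}+3\binom{k}{4}$ for the staircase, $d\binom{k-1}{2}$ from the $d$ added units, and $\binom{d}{2}$ from the pairs of added units. The base case $n=4$ (giving the value $2$, attained by $1423$) checks this.

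\textbf{Step 3 (counting maximizers).} The extremal permutations correspond bijectively to the optimal tail vectors. The first family consists of $d$-subsets of $k-1$ positions receiving the extra units, which gives $\binom{k-1}{d}$. The second family is a degenerate configuration, where two units combine with the creation or absorption of a length-$1$ run; it contributes $\binom{k-2}{d-2}$.

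I expect the main obstacle to be Step 2: proving rigorously that no other shape ties with the staircase, and pinning down exactly the second family behind $\binom{k-2}{d-2}$. I would first handle this by checking small cases exhaustively, and then use a potential-function argument comparing $F$ for consecutive $m$ at fixed $n$.
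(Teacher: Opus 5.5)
Your Step 1 is sound. It reaches exactly the paper's reduced set $\mathcal{R}_n'$ (run $i$ begins with $i$, and the tails are decreasing blocks taken from the top), and it adds that the normal form is the \emph{unique} maximizer for a fixed tail vector. Your transfer formula $(j-i)+(t_j-t_i-1)$ is also the paper's net gain $a_t-a_s+t-s-1$. Two small repairs are needed. In the first swap, pair the tail entry $x$ with the \emph{smallest} first element exceeding $x$; otherwise the first elements can stop increasing (e.g.\ $1\,2\,7\,3\,6\,4\,5$ with $2\leftrightarrow 4$). The second swap preserves all descents only after every tail already exceeds every first element.

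The genuine gap is Step 2. This is where the theorem actually lives, and you only describe it heuristically.
\begin{enumerate}
\item You use the transfer only from a later run to an earlier one. The resulting condition $t_i\ge t_j+(j-i)-1$ bounds tails only from below: $t=(10,1,0)$ satisfies it, yet moving a unit from $t_1$ to $t_2$ gains $7$. With $c_i=t_i+i$, a transfer from run $j$ to run $i$ changes $F$ by $c_j-c_i-1$. You therefore need both directions to get $|c_i-c_j|\le 1$, and even then only when the move is admissible, since a non-final tail cannot drop to $0$. This admissibility issue is exactly what forces the paper's case split.
\item You never specify or evaluate a move that changes the number of runs. So nothing shows that the optimal number of runs is $k-1$ or $k$, or that $k$ runs tie with $k-1$ exactly when $d\ge 2$. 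The paper handles this explicitly:
\begin{itemize}
\item for $r\le k-2$ it picks $p$ with $b_p=a_p-(k-p)\ge 1$ and splits off a final run of length one, gaining $a_p-(r-p+2)\ge 1$ (with a separate fix when $a_r=1$);
\item for $r\ge k-1$ and $M\ge 2$ it transfers from the first index of maximal $b$ to the first index of minimal $b$, gaining at least $M-m-1\ge 1$;
\item in the boundary cases where that transfer is inadmissible ($M=2$ first attained at an index $t\ge k$, or $M=1$ with $r=k$), it instead merges two runs or shortens or deletes the last run.
\end{itemize}
\item For the exact count you must show that every configuration outside the two families is strictly improvable, or otherwise characterize all maximizers. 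Necessary conditions at a maximum do not give the count unless you prove they cut out precisely those families. Your description and evaluation of the second family are asserted rather than derived.
\end{enumerate}

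Your framework can be completed along a line somewhat different from the paper's. One checks that
$F=m(n-m)+\tfrac12(n-m)^2+\tfrac12\sum_{i\le m}i^2-\tfrac12\sum_i c_i^2$,
subject to $\sum_i c_i=n+\binom{m}{2}$, $c_i\ge i+1$ for $i<m$, and $c_m\ge m$. For fixed $m$ this is minimizing a sum of squares under lower bounds, with the balanced choice $c_i\in\{q,q+1\}$ when it is feasible. You would then compare the optima over $m$. As written, however, neither the value of $u_n$ nor the count $\binom{k-1}{d}+\binom{k-2}{d-2}$ is established.
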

\begin{proof}
We may assume $n \geq 3$, whence $k \geq 3$, since the formula is seen to give the correct value of zero in the cases $n=1$ and $n=2$.  We decompose $\rho \in \mathcal{R}_n$ into runs as $\rho=\rho^{(1)}\cdots \rho^{(\ell)}$ for some $\ell \geq 2$.  In order for $\rho$ to achieve the maximum possible number of inversions, we must have $\rho^{(1)}=1a(a+1)\cdots n$ for some $a \in [3,n]$.  For if not, and $|\rho^{(1)}|=b$ where $2 \leq b \leq n-1$, then replacing $\rho^{(1)}$ with $1(n-b+2)(n-b+3)\cdots n$, and subsequently replacing the remaining section $\rho^{(2)}\cdots\rho^{(\ell)}$ with the equivalent sequence using the letters in $[2,n-b+1]$, results in $\rho' \in \mathcal{R}_n$ with $\text{inv}(\rho')>\text{inv}(\rho)$. Continuing this reasoning in an inductive manner, it follows that $\rho^{(i+1)}$ for $1 \leq i \leq \ell-1$ must have the form $(i+1)u(u+1)\cdots v$, where $v=\min\{\rho^{(1)}\cdots\rho^{(i)} \backslash [i]\}-1$ and $\pi^{(\ell)}$ is possibly of length one.  Thus, we may restrict our search to $\rho$ having the stated form, the subset of $\mathcal{R}_n$ of which we denote by $\mathcal{R}_n'$.

Let $\pi=\pi^{(1)}\cdots \pi^{(r)} \in \mathcal{R}_n'$ for some $r \geq 1$, where $\pi^{(i)}$ denotes the $i$-th run of $\pi$, and we seek to maximize $\text{inv}(\pi)$.  Note that $\pi$ is uniquely determined by its sequence of run cardinalities $a_i=|\pi^{(i)}|$ for $1 \leq i \leq r$.  We will subsequently identify $\pi \in \mathcal{R}_n'$ with the associated composition of $n$ given by $a_\pi=(a_1,\ldots,a_r)$.  Note, for all $\pi$, that each of the parts of $a_\pi$ is at least two, except for possibly the last. Given $a_\pi$, define the sequence $b_\pi$ by
\[
 b_\pi =
\begin{cases}
    (a_1-k+1,a_2-k+2,\ldots,a_{k-1}-1,a_k,a_{k+1},\ldots,a_r),   & \text{if } r \geq k; \\
    (a_1-k+1,a_2-k+2,\ldots,a_r-k+r),    & \text{if } r \leq k-1.
\end{cases}
\]

Let $\mathcal{R}$ denote the subset of $\mathcal{R}_n'$ consisting of those $\pi$ for which $r=k-1$ or $k$ such that $b_\pi$ contains only 0's and 1's.  Let $\mathcal{T}=\mathcal{R}_n'\backslash \mathcal{R}$, and we show that no member of $\mathcal{T}$ can achieve the maximum inv. Let $\pi \in \mathcal{T}$, with $b_\pi=(b_1,\ldots,b_r)$, where we first assume $r \geq k-1$. Let $M=\max(b_i)_{1 \leq i \leq r}$, $m=\min(b_i)_{1 \leq i \leq r}$ and $w=M-m$.  Then $r \geq k-1$ implies $w \geq 2$.  To show this, first note $\pi \in \mathcal{T}$ implies $M \geq 1$ and we consider cases on $M$.  If $M \geq 2$, then there must exist an index $i \in [k-1]$ such that $b_i \leq 0$, whence $w \geq 2$, for if not, then
$$\sum_{i=1}^r a_i=\binom{k}{2}+\sum_{i=1}^r b_i\geq \binom{k}{2}+k>n,$$
which is impossible. If $r \geq k+1$, then the component $b_k$ is at least two, as $b_i=a_i$ for $i \geq k$.  Thus, if $M=1$, then we must have $r=k-1$ or $k$ and $\pi \in \mathcal{T}$ implies $m<0$, whence $w \geq 2$.

Let $s, t \in [r]$ such that $b_s=m$ and $b_t=M$, where we assume $s$ and $t$ are the respective smallest such indices.  Suppose now $\pi \in \mathcal{T}$ with $r \geq k-1$ and $M \geq 2$, where if $M=2$, then $t \leq k-1$.  We show that the maximum value of inv on $\mathcal{R}_n'$ is not achieved by such $\pi$. To do so, let $\pi^*$ denote the member of $\mathcal{R}_n'$ which arises from $\pi$ by changing $a_\pi$ as follows:  replace $a_s$ with $a_s+1$ and $a_t$ with $a_t-1$, keeping all of the other components of $a_\pi$ the same.  Note $a_t \geq 3$, by the assumptions on $\pi$, whence $\pi^*$ also has $r$ runs.

We now show $\text{inv}(\pi^*)>\text{inv}(\pi)$, and thus $\pi$ is not maximal.  To do so, we consider cases on $s$ and $t$. If $s>t$, then changing the $s$-th and $t$-th components of $a_\pi$ as described above has the effect of losing $\sum_{i=t+1}^s a_i$ inversions, while creating $a_t-2+\sum_{i=t+1}^{s-1}(a_i-1)$. Thus, since $w\geq 2$ and $t<s \leq k-1$, we obtain a net gain of
\begin{align*}
 a_t-2+\sum_{i=t+1}^{s-1}(a_i-1)-\sum_{i=t+1}^s a_i&=a_t-a_s-(s-t+1)=(a_t-(k-t))-(a_s-(k-s))-1=M-m-1\geq 1
 \end{align*}
 inversions in the transition from $\pi$ to $\pi^*$, whence $\text{inv}(\pi^*)>\text{inv}(\pi)$, as desired.   Now suppose $s<t \leq k-1$.  In this case, we obtain a net gain of
 $$a_t-1+\sum_{i=s+1}^{t-1}a_i-\sum_{i=s}^{t-1}(a_i-1)=a_t-a_s+t-s-1=w-1 \geq 1$$
 inversions, leading to the same conclusion.  If $t \geq k$, then we must have $M \geq 3$, by the assumptions on $\pi$.  One may then perform the same replacements as before for a net gain of
 $$a_t-a_s+t-s-1\geq M-(a_s-(k-s))-1=w-1 \geq 1$$
 inversions, which completes the cases on $s$ and $t$.

 Now suppose $\pi \in \mathcal{T}$, where $t \geq k$ and $M=2$.  In this case, we replace $a_s$ with $a_s+1$ and either replace $a_t=2,\,a_{t+1}$ with $a_{t+1}+1$ if $t<r$ or $a_t=2$ with $a_t=1$ if $t=r$.  Note that we are decreasing the number of runs by one in the first case, where inversions in addition to those considered already would be created by elements in the new run corresponding to $a_{t+1}+1$ if $t<r-1$.  Changing $a_\pi$ as described then results in a net gain of at least
 $$a_t-a_s+t-s-1\geq 1-(a_s-(k-s))=1-m\geq 1$$
 inversions, since $M=2$ and $w \geq 2$ implies $m \leq 0$.

 Next, assume $ \pi \in \mathcal{T}$, where  $r \geq k-1$ and $M=1$.  Note $M=1$ implies $r=k-1$ or $k$ in this case, since $b_k=a_k\geq 2$ if $r \geq k+1$. We consider cases on $t$.  If $t \leq k-2$ or if $t=r=k-1$, then since $w \geq 2$ we can proceed as above in the case when $M \geq 2$ and both $s$ and $t$ belonged to $[k-1]$.  If $r=k$ and $t=k-1$ or $k$, then replace $a_s$ with $a_s+1$ and delete $a_r=1$.  One may verify in all cases that the resulting member of $\mathcal{R}_n'$ has strictly more inversions than $\pi$.

 Now assume $\pi \in \mathcal{T}$, with $r \leq k-2$.  Note $M\geq 1$, for otherwise, the components of $a_\pi$ would add up to at most $n-1$.  Let $p \in [r]$ such that $b_p=a_p-(k-p)\geq 1$.  Let $\pi^*$ be obtained from $\pi$ in this case by replacing $a_p$ with $a_p-1$ and adding the component $a_{r+1}=1$ to $a_\pi$, assuming for now $a_r>1$.  This results in a net increase of
 $$a_p-2+\sum_{i=p+1}^r(a_i-1)-\sum_{i=p+1}^ra_i=a_p-(r-p+2)\geq a_p-(k-p)\geq 1$$
 inversions, whence $\text{inv}(\pi^*)>\text{inv}(\pi)$.  If $a_r=1$, then we replace $a_p$ with $a_p-1$ and change $a_r$ from 1 to 2.  By the same reasoning, one has that this operation results in a member of $\mathcal{R}_n'$ with strictly more inversions.

 Combining all of the cases above shows that the maximum inv value on $\mathcal{R}_n'$, and hence on $\mathcal{R}_n$, must be achieved on $\mathcal{R}$.  We now compute inv for members of $\mathcal{R}$.  First suppose $n=\binom{k}{2}$, where $k \geq 3$. In this case, it is seen that $\mathcal{R}$ consists of a single member $\sigma \in \mathcal{R}_n'$ such that $a_\sigma=(k-1,k-2,\ldots,1)$.  Note that if $i \in [k-2]$, then each of the $k-i$ numbers, except for the first, within the $i$-th run of $\sigma$ corresponds to the larger entry in a total of $\sum_{j=1}^{k-i-1}j=\binom{k-i}{2}$ inversions.  Considering all possible $i$ then implies
 $$\text{inv}(\sigma)=\sum_{i=1}^{k-2}(k-i-1)\binom{k-i}{2}=\sum_{i=1}^{k-2}i\binom{i+1}{2}=\binom{k}{3}+3\binom{k}{4},$$
 which establishes the $d=0$ case of \eqref{maxinve1}.

 So assume $n=\binom{k}{2}+d$ for some $d \in [k-1]$.  Given a statement $P$, let $[P]=1$ or $0$, depending on the truth or falsehood of $P$.  In this case, the set $\mathcal{R}$ consists of $\tau$ for which $a_\tau$ is expressible as either
 $$a_\tau=(k-1+[1 \in S],k-2+[2 \in S],\ldots,1+[k-1 \in S]),$$
 for some $S \subseteq [k-1]$ of size $d$, or
 $$a_\tau=(k-1+[1 \in T],k-2+[2\in T],\ldots,2+[k-2 \in T],2,1),$$
 for some $T \subseteq [k-2]$ of size $d-2$ (provided $d \geq 2$).  We now compute $\text{inv}(\tau)$ for $\tau$ of the first form above.  To do so, we add to $\text{inv}(\sigma)$, where $\sigma$ is as above, the number of extra inversions arising when the run lengths for runs of $\sigma$ corresponding to members of $S$ are increased by one. Suppose $1 \leq i_1<\cdots<i_d \leq k-1$ represent the members of $S$.  Then run $i_1$ of $\tau$ is of length $k-i_1+1$, with the extra added element in this run involved in $\binom{k-1}{2}+d-1$ inversions not accounted for by $\text{inv}(\sigma)$.  If $2 \leq j \leq d$, then the extra element in the run $i_j$ of $\tau$ is seen to contribute $\binom{k-1}{2}+d-j$ inversions that involve this element and that have not been previously accounted for by runs prior to $i_j$ or by $\text{inv}(\sigma)$.

 This implies
 $$\text{inv}(\tau)=\text{inv}(\sigma)+\sum_{j=1}^d\left(\binom{k-1}{2}+d-j\right)=\binom{k}{3}+3\binom{k}{4}+d\binom{k-1}{2}+\binom{d}{2}.$$
 Similar considerations show that $\text{inv}(\tau)$ is given by this same formula if $a_\tau$ is of the second form above.  Thus, for all $n$, each member of $\mathcal{R}$ achieves the same (maximum) value of inv as stated in \eqref{maxinve1}, with $|\mathcal{R}|=\binom{k-1}{d}+\binom{k-2}{d-2}$,  which completes the proof.
 \end{proof}

 \noindent \emph{Remark:}  We noticed the article by Beveridge et al. \cite{Bev}, which appeared in the archives soon after we had finished an initial draft of the current paper, where it was found independently by the authors a formula for the maximum number of inversions in 32-1 avoiding permutations of length $n$, equivalently, in run-sorted permutations of length $n+1$.  Their expression was shown to coincide with A023536 in \cite{Sl} and was obtained after an in-depth study of a geometric representation of the Lehmer code for a 32-1 avoiding permutation, which they called a \emph{jump-float sequence}. Here, we have provided an alternative proof by making use of runs that is shorter than the one given in \cite{Bev} and yields a different formula for the maximum number of inversions.  Further, our argument may be extended so as to obtain an expression for the maximum inv value on $\mathcal{R}_{n,k}$ for each $k$ (see Theorem \ref{maxinvrnk} below) and to establish the unimodality of this sequence over $k$ for a fixed $n$ (see Corollary \ref{unimod}). \medskip

Given $j \geq 1$ and $n \geq 2j-1$, let $m_{n,j}$ denote the maximum number of inversions achieved by a member of $\mathcal{R}_{n,j}$.  Upon using the appropriate parts from the proof of Theorem \ref{maxinv}, one can establish the unimodality of $m_{n,j}$.

\begin{corollary}\label{unimod}
Let $n=\binom{k}{2}+d$, where $k \geq 3$ and $0 \leq d \leq k-1$.  Then we have $m_{n,j}<m_{n,j+1}$ if $j \leq k-2$ and $m_{n,j-1}>m_{n,j}$ if $j \geq k+1$.  In particular, the sequence $m_{n,j}$ for $1 \leq j \leq \lfloor (n+1)/2 \rfloor$ and $n$ fixed is unimodal.
\end{corollary}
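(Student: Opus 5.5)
We reuse the reduction and exchange operations from the proof of Theorem \ref{maxinv}, now keeping the number of runs $j$ fixed or changing it by one in a controlled way. First, the reduction to $\mathcal{R}_n'$ preserves the number of runs: replacing $\rho^{(1)}$ by $1(n-b+2)\cdots n$ and standardizing the rest leaves the run count unchanged. So for each $j$ the value $m_{n,j}$ is attained by some $\pi\in\mathcal{R}_n'$ with exactly $j$ runs, and such a $\pi$ is encoded by a composition $a_\pi=(a_1,\ldots,a_j)$ of $n$ with all parts at least two except possibly the last.

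\textbf{Increasing part, $j\le k-2$.} Take $\pi\in\mathcal{R}_n'$ with $j$ runs and $\text{inv}(\pi)=m_{n,j}$. As in the case $r\le k-2$ of the proof of Theorem \ref{maxinv}, the $b$-sequence has $\sum b_i=n-\sum_{i=1}^{j}(k-i)>0$, since $j\le k-2$. Hence some $p$ satisfies $b_p=a_p-(k-p)\ge 1$. If $a_j>1$, we replace $a_p$ by $a_p-1$ and append a part $1$; this gives a permutation with $j+1$ runs. The net gain computed there is $a_p-(j-p+2)\ge a_p-(k-p)\ge1$, using $j+2\le k$, so $m_{n,j+1}>m_{n,j}$. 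If $a_j=1$, the paper's alternative move (changing $a_j$ from $1$ to $2$) keeps $j$ runs, so it does not help here. Instead I would decrease $a_p$ by one and split off a new last run: the composition becomes $(\ldots,a_p-1,\ldots,a_{j-1},1,1)$. The trouble is that two consecutive final parts of $1$ are not valid, since a part $1$ is allowed only at the end. So I would rather move an element to form $(\ldots,a_p-1,\ldots,2,1)$ when $a_{j-1}\ge3$, or pick $p$ more carefully. \emph{This subcase is the main obstacle.} Two fixes to try first: (a) choose $p$ maximal with $b_p\ge1$; (b) apply the $M\ge2$ exchange within $j$ runs first, to normalize the maximizer into the form $b\in\{0,1\}$ on the first part of the run indices. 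Then verify the inversion count directly, using the same bookkeeping as the net-gain formulas $a_t-a_s+t-s-1$.

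\textbf{Decreasing part, $j\ge k+1$.} Take a maximizer $\pi$ with $j$ runs. Then $r=j\ge k+1$, so $b_k=a_k\ge2$ and $M\ge2$ with $t\ge k$. A counting argument as in the proof ($\sum b_i=n-\binom{k}{2}\le k-1$ while $b_k\ge 2$) forces some $b_s\le 0$ among the first $k-1$ indices. We then apply the operation from the case $t\ge k$, $M=2$: increase $a_s$ by one and merge $a_t=2$ into the next run, or turn it into $1$ when $t=r$. In the first alternative this lowers the run count by one with net gain at least $1-m\ge1$. In the second alternative the run count is unchanged, so I would instead take $t$ to be the last index $\ge k$ with $a_t=2$, or use a part $a_t\ge3$ in the $M\ge3$ case, combined with removing the final run. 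The aim is to produce a permutation with $j-1$ runs and strictly more inversions, giving $m_{n,j-1}>m_{n,j}$.

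\textbf{Unimodality.} With both parts in hand, $m_{n,j}$ strictly increases up to $j=k-1$ and strictly decreases from $j=k$ onward. The comparison between $m_{n,k-1}$ and $m_{n,k}$ is unrestricted, so the sequence is unimodal with peak at $j=k-1$ or $k$. This matches the set $\mathcal{R}$ in Theorem \ref{maxinv}, which consists of permutations with $k-1$ or $k$ runs.
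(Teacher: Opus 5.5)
\paragraph{The gap in the increasing part.} Your reduction to $\mathcal{R}_n'$ and the case $a_j\ge 2$ of the increasing part are fine. The subcase $a_j=1$, however, is left genuinely open, and the reason you give for discarding the move that changes $a_j$ from $1$ to $2$ is exactly the missing idea. That move is what the paper uses.

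For $j\le k-2$ one has $b_j=1-(k-j)\le -1$. On the other hand, $\sum_i b_i=n-\sum_{i=1}^{j}(k-i)\ge\binom{k-j}{2}\ge 1$, which forces some $t\le j-1$ with $b_t\ge 1$. Then $a_t\ge k-t+1\ge 4$, so $a_t-1$ is still a legal interior part. Moving one element from run $t$ to run $j$ keeps $j$ runs and changes inv by $a_t-a_j+t-j-1=b_t-b_j-1\ge 1$.

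Hence, with $\pi$ chosen as a maximizer on $\mathcal{R}_{n,j}$ as you do, the subcase $a_j=1$ cannot occur at all. Equivalently, as in the paper, first pass to $\widehat{\pi}$ whose last part is $2$, and then append a part $1$. The sum of the $b_i$ is unchanged, so some $b_p\ge 1$ with $p\le j-1$ remains, and you obtain $\text{inv}(\pi^*)>\text{inv}(\widehat{\pi})>\text{inv}(\pi)$. Your fix (a) does not address the real obstruction: a singleton run cannot follow a singleton run, whatever $p$ is. Fix (b) only gestures at a within-$j$ improvement without identifying this one.

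\paragraph{The decreasing part.} This part is likewise a plan (``the aim is to\ldots'') rather than an argument, and the alternatives $t=r$ and $M\ge 3$ are left unresolved.

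The paper avoids them as follows. While the last run is nonempty, $\sum_{i\le k-1}b_i=d-\sum_{i\ge k}a_i\le k-4$, so some $s\le k-1$ has $b_s\le 0$. Move the elements of the last run one at a time into such runs; the index $s$ may change between steps. Each step strictly increases inv, including the final step that deletes the last run. The result is a member of $\mathcal{R}_{n,j-1}$ with at least $\binom{a_j+1}{2}$ more inversions.

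Your own route can also be completed. For a maximizer one cannot have $a_k\ge 3$: moving one element from run $k$ to run $s$ keeps $j$ runs and gains $a_k-b_s-1\ge 2$. So $a_k=2$ with $k<j$. Merging at $t=k$, which is never the last run, lowers the run count to $j-1$ with gain $1-b_s\ge 1$, so the problematic case $t=r$ never has to be faced. With both parts repaired, your unimodality conclusion follows as stated.
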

\begin{proof}
Let $\mathcal{R}_{n,j}'=\mathcal{R}_n' \cap \mathcal{R}_{n,j}$, where $\mathcal{R}_n'$ (and other notation used here) is as before.  Let $\pi \in \mathcal{R}_{n,j}'$, with $a_\pi=(a_1,\ldots,a_j)$.  First suppose $j \in [k-2]$ and we show $m_{n,j}<m_{n,j+1}$.   To do so, it suffices to exhibit $\pi^* \in \mathcal{R}_{n,j+1}'$ with $\text{inv}(\pi^*)>\text{inv}(\pi)$.  The case in the proof of Theorem \ref{maxinv} wherein $\pi \in \mathcal{T}$ and $r \leq k-2$ provides such a $\pi^*$ if $a_j \geq 2$.  So assume $a_j=1$ and note $M\geq 1$ implies we must have  $b_t \geq 1$ for some $t \leq j-1$.  In this case, we first replace $a_t$ by $a_t-1$ and change $a_j$ from 1 to 2 to obtain $\widehat{\pi}$.  Using $\widehat{\pi}$ whose last run is of length two, we proceed as in the prior case to obtain $\pi^* \in \mathcal{R}_{n,j+1}'$.  One may verify $\text{inv}(\pi^*)>\text{inv}(\widehat{\pi})>\text{inv}(\pi)$.

Now suppose $\pi \in \mathcal{R}_{n,j}$ where $j \geq k+1$ and we seek $\pi^* \in \mathcal{R}_{n,j-1}'$ such that $\text{inv}(\pi^*)>\text{inv}(\pi)$.  Then $j \geq k+1$ implies there must exist $s \in [k-1]$ such that $b_s \leq 0$ within $b_\pi=(b_1,\ldots,b_j)$.  Replacing $a_s$ by $a_s+1$ and  $a_j$ by $a_j-1$ within $a_\pi$ is seen to result in a member of $\mathcal{R}_{n}'$ with strictly greater inv.  We then repeat this operation until all of the letters occurring in the final run of $\pi$ have been exhausted, where the part $a_s$ that is increased may need to change from one step to the next.  Let $\pi^*$ denote the member of $\mathcal{R}_{n,j-1}'$ that arises from the resulting composition with $j-1$ parts.  One may verify $\text{inv}(\pi^*) \geq \text{inv}(\pi)+\binom{\ell+1}{2}$, where $\ell=a_j$.  This implies $m_{n,j-1}>m_{n,j}$ for $j \geq k+1$ and completes the proof of the first statement.  The unimodality in $j$ of the sequence $m_{n,j}$ for a fixed $n$ then follows from the first statement.  Note further that the proof of Theorem \ref{maxinv} shows that the maximum value of $m_{n,j}$ always occurs when $j=k-1$, with $j=k$ maximal as well if and only if $ d\geq 2$ in the representation of $n$.
\end{proof}

The sequence $u_n$ may also be expressed without cases on $n$ as above in terms of the following summation.

\begin{theorem}\label{maxinvpr}
If $n \geq 1$, then
\begin{equation}\label{maxinvpre1}
u_{n+2}=\frac{n^2+5n-4}{2}-\sum_{j=2}^n\lfloor 1/2+\sqrt{2j+4}\rfloor.
\end{equation}
\end{theorem}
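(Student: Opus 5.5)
The plan is to deduce \eqref{maxinvpre1} from Theorem \ref{maxinv} by comparing successive differences of both sides and checking a single base case. Write $f(n)$ for the right-hand side of \eqref{maxinvpre1}. For the base case $n=1$ the sum is empty, so $f(1)=(1+5-4)/2=1$. Also $u_3=1$, since $\mathcal{R}_3=\{123,132\}$ (equivalently, take $k=3$, $d=0$ in \eqref{maxinve1}). It then suffices to show $u_{N+2}-u_{N+1}=f(N)-f(N-1)$ for all $N\geq 2$. A direct computation gives
$$f(N)-f(N-1)=N+2-\lfloor 1/2+\sqrt{2N+4}\rfloor .$$
Setting $m=N+1$, the goal becomes the increment identity
$$u_{m+1}-u_m=m+1-\lfloor 1/2+\sqrt{2m+2}\rfloor, \qquad m\geq 3 .$$

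Next I would compute the increments of $u$ from \eqref{maxinve1}. Write $m=\binom{k}{2}+d$ with $0\leq d\leq k-1$, and set $g_k(d)=\binom{k}{3}+3\binom{k}{4}+d\binom{k-1}{2}+\binom{d}{2}$.

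\emph{Case $d\leq k-2$.} Then $m+1$ has the same $k$, and $g_k(d+1)-g_k(d)=\binom{k-1}{2}+d$.

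\emph{Case $d=k-1$.} Then $m+1=\binom{k+1}{2}$, and $u_{m+1}=g_{k+1}(0)$. I would show $g_k(k)=g_{k+1}(0)$, so that the same increment formula holds here too. Expanding $\binom{k+1}{3}+3\binom{k+1}{4}$ by Pascal's rule gives $\binom{k}{3}+3\binom{k}{4}+\binom{k}{2}+3\binom{k}{3}$. On the other side, $g_k(k)=\binom{k}{3}+3\binom{k}{4}+k\binom{k-1}{2}+\binom{k}{2}$. These agree because $k\binom{k-1}{2}=3\binom{k}{3}$.

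Hence in all cases $u_{m+1}-u_m=\binom{k-1}{2}+d=m+1-k$.

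It remains to show $\lfloor 1/2+\sqrt{2m+2}\rfloor=k$ whenever $\binom{k}{2}\leq m\leq\binom{k}{2}+k-1$. This is the only step needing care, and it is elementary. In that range $2m+2\in[k^2-k+2,\,k^2+k]$. This interval lies strictly between $(k-1/2)^2=k^2-k+1/4$ and $(k+1/2)^2=k^2+k+1/4$. Therefore $k\leq 1/2+\sqrt{2m+2}<k+1$, which gives the floor identity.

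Combining the floor identity with the increment computation proves the increment identity, and induction from the base case yields \eqref{maxinvpre1}. I expect the main obstacle, mild as it is, to be the boundary transition $d=k-1\to d=0$ in the increment computation: there one must verify that \eqref{maxinve1} extends continuously to $d=k$, and the identity $k\binom{k-1}{2}=3\binom{k}{3}$ is exactly what makes this work.
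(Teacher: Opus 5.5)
Your argument is correct and is essentially the paper's: both compare successive differences of the two sides, using the closed form of Theorem~\ref{maxinv} together with the fact that $\lfloor 1/2+\sqrt{2m+2}\rfloor=k$ for $\binom{k}{2}\le m\le\binom{k}{2}+k-1$. The only difference is at the block boundaries. The paper re-anchors at every $n=\binom{k}{2}-2$, evaluating the floor sum in closed form via $\sum i^2$, and checks $n\le 3$ by hand. You instead use the single base case $n=1$ and cross each boundary with the identity $g_k(k)=g_{k+1}(0)$, which is a slightly shorter route.
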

\begin{proof}
We may assume $n \geq 4$, as one may verify \eqref{maxinvpre1} directly for $1 \leq n \leq 3$.  Let $v_n$ denote the expression on the right-hand side of \eqref{maxinvpre1}.  We first show the equality $v_n=u_{n+2}$ when $n=\binom{k}{2}-2$, using the formula for $u_n$ in \eqref{maxinve1}.  To compute $v_n$, where $n=\binom{k}{2}-2$ for some $k \geq 4$, we first rewrite the sum strategically as
\begin{equation}\label{maxinvpre2}
\sum_{j=2}^{\binom{k}{2}-2}\lfloor 1/2+\sqrt{2j+4}\rfloor=\sum_{i=3}^{k-1}\sum_{j=\binom{i}{2}-1}^{\binom{i+1}{2}-2}\lfloor 1/2+\sqrt{2j+4}\rfloor.
\end{equation}
Now observe that if $i \geq 3$ and $\binom{i}{2}-1 \leq j \leq \binom{i+1}{2}-2$, then
$$\left(i-\frac{1}{2}\right)^2 < 2j+4 < \left(i+\frac{1}{2}\right)^2,$$
and hence $\lfloor 1/2+\sqrt{2j+4}\rfloor=i$ for all such $j$. Substituting this into \eqref{maxinvpre2} implies
$$\sum_{j=2}^{\binom{k}{2}-2}\lfloor 1/2+\sqrt{2j+4}\rfloor=\sum_{i=3}^{k-1}\sum_{j=\binom{i}{2}-1}^{\binom{i+1}{2}-2} i=\sum_{i=3}^{k-1}i^2=\frac{k(k-1)(2k-1)}{6}-5.$$
Therefore, we have
\begin{align*}
v_{\binom{k}{2}-2}&=\frac{(\binom{k}{2}-2)^2+5(\binom{k}{2}-2)-4}{2}-\left(\frac{k(k-1)(2k-1)}{6}-5\right)=\frac{\binom{k}{2}^2+\binom{k}{2}}{2}-\frac{k(k-1)(2k-1)}{6}\\
&=\binom{k}{3}+3\binom{k}{4}=u_{\binom{k}{2}},
\end{align*}
which establishes $v_n=u_{n+2}$ in the case $n=\binom{k}{2}-2$, as desired.

To complete the proof of the equality $v_n=u_{n+2}$, it thus suffices to show
\begin{equation}\label{maxinvpre3}
v_n-v_{n-1}=u_{n+2}-u_{n+1}, \qquad \binom{k}{2}-1 \leq n \leq \binom{k}{2}+k-3,
\end{equation}
where $k \geq 4$.  Represent $n$ within the range stated in \eqref{maxinvpre3} by  $n=\binom{k}{2}+b-2$, where $1 \leq b \leq k-1$.
By the prior observations, we have $\lfloor 1/2+\sqrt{2n+4}\rfloor =k$ for all such $n$ and hence
\begin{align*}
v_n-v_{n-1}&=\frac{n^2+5n-4}{2}-\frac{(n-1)^2+5(n-1)-4}{2}-\lfloor 1/2+\sqrt{2n+4}\rfloor=n+2-k=\binom{k-1}{2}+b-1.
\end{align*}
On the other hand, by \eqref{maxinve1}, we have
\begin{align*}
u_{n+2}-u_{n+1}&=\binom{k}{3}+3\binom{k}{4}+b\binom{k-1}{2}+\binom{b}{2}-\left(\binom{k}{3}+3\binom{k}{4}+(b-1)\binom{k-1}{2}+\binom{b-1}{2}\right)\\
&=\binom{k-1}{2}+b-1
\end{align*}
for such $n$. This implies \eqref{maxinvpre3} and completes the proof of \eqref{maxinvpre1}.
\end{proof}

\noindent \emph{Remark:} Theorem \ref{maxinvpr} shows $u_{n}=A023536(n-2)$ for all $n \geq 3$.  It is known that $A023536(n-2)$ gives the number of possible values for the number of diagonals in a convex polyhedron with $n+1$ vertices and it would be interesting to provide a direct proof that this equals the maximum number of inversions within a member of $\mathcal{R}_{n}$. \medskip

We have the following explicit formulas for $m_{n,k}$.

\begin{theorem}\label{maxinvrnk}
If $2k-1 \leq n <\binom{k+1}{2}$, then
\begin{equation}\label{maxinvrnke1}
m_{n,k}=k^2+(r^2-r+2d-2)k+3\binom{r+1}{4}-2\binom{r+1}{3}+(d-1)\binom{r}{2}-dr+\frac{d^2-5d+2}{2},
\end{equation}
where $n=2k-1+\binom{r}{2}+d$ with $1 \leq r \leq k-2$ and $0 \leq d <r$. If $n \geq \binom{k+1}{2}$, then
\begin{align}
m_{n,k}&=\frac{3k+3r+2}{4}\binom{k+r+1}{3}-\frac{3r-2}{4}\binom{r+1}{3}-\binom{k+r-d+1}{3}-\left(rk+\binom{k}{2}+d\right)\binom{r+1}{2}\notag\\
&\quad+(k+r)\binom{d}{2}-\binom{d}{3}, \label{maxinvrnke2}
\end{align}
where $n=\binom{k+1}{2}+rk+d$ with $r \geq 0$ and $0 \leq d <k$.  Furthermore, the value of $m_{n,k}$ is achieved by $\binom{r}{d}$ members of $\mathcal{R}_{n,k}$ in the first case and $\binom{k}{d}$ members of $\mathcal{R}_{n,k}$ in the second.
\end{theorem}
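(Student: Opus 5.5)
The plan is to reduce first to the family $\mathcal{R}_{n,k}'=\mathcal{R}_n'\cap\mathcal{R}_{n,k}$ and then optimize over compositions. Members of this family are determined by a composition $a=(a_1,\ldots,a_k)$ of $n$ with $a_i\ge 2$ for $i<k$ and $a_k\ge1$. The first paragraph of the proof of Theorem~\ref{maxinv} shows that a maximizer over $\mathcal{R}_{n,k}$ can be taken in $\mathcal{R}_{n,k}'$. That argument pushes the largest letters into earlier runs without changing the number of runs, so it applies verbatim here.

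Next I would write $\text{inv}$ as an explicit function of the composition. The non-initial letters of run $i$ are the largest letters not yet used. Each such letter exceeds every letter in later runs and every initial letter $j>i$. Summing gives
\[
\text{inv}(a)=\sum_{i=1}^{k}(a_i-1)\Bigl(\sum_{j>i}a_j\Bigr)+\sum_{i}\binom{a_i-1}{2}\cdot 0+\cdots .
\]
I would work this out cleanly as $\text{inv}(a)=\sum_{i<j}(a_i-1)a_j$. Then I would optimize via the exchange moves $a_s\mapsto a_s+1$, $a_t\mapsto a_t-1$ used in Theorem~\ref{maxinv}. Such a move changes inv by $a_t-a_s+t-s-1$ up to sign conventions, so a maximizer must be ``balanced'' with respect to the shifted sequence $a_i+i$. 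That is, $c_i=a_i-(k-i)$ (or the analogue with the constraints $a_i\ge2$, $a_k\ge1$) takes at most two consecutive values.

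Two regimes arise. If $n\ge\binom{k+1}{2}$, the unconstrained balancing is feasible. The maximizer is $a_i=k-i+1+r+[i\in S]$ with $|S|=d$, $S\subseteq[k]$, which gives the count $\binom{k}{d}$. If $2k-1\le n<\binom{k+1}{2}$, the constraints $a_i\ge2$, $a_k\ge1$ bind on a tail. The optimal shape is then a staircase $(r+1+\epsilon_1,\,r+\epsilon_2,\ldots,3+\epsilon_{r-1},\,2,\ldots,2,1)$ with the $\epsilon$'s forming a $d$-subset of $r$ positions, where $n=2k-1+\binom r2+d$. This gives $\binom{r}{d}$ maximizers.

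For this second regime I would show, via the same exchange inequalities, that any part $>2$ must precede all parts equal to $2$, and that the parts above the floor must form a balanced staircase. For the formulas, I would substitute the optimal compositions into $\sum_{i<j}(a_i-1)a_j$ and simplify with standard binomial summation. A useful shortcut is to compute the $d=0$ value first and then add the increments from the $d$ extra letters, as in the proof of Theorem~\ref{maxinv} (the $j$-th extra letter contributes a linear amount). The main obstacle is the boundary analysis in the first regime. There I must show that no maximizer mixes floor parts with parts deviating from the staircase, and that the last part is $1$ rather than $2$. The $M=2$, $t\ge k$ case of Theorem~\ref{maxinv} indicates the needed care. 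The algebraic verification of (\ref{maxinvrnke1}) and (\ref{maxinvrnke2}) is routine but lengthy, and I would check it at small $k$.
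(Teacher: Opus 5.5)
Your proposal is correct and follows essentially the same route as the paper: you restrict to $\mathcal{R}_{n,k}'$, where the replacement argument from Theorem~\ref{maxinv} strictly increases inv and preserves the number of runs, so every maximizer (not just some maximizer) lies there. You then use the run-preserving exchange $a_s\mapsto a_s+1$, $a_t\mapsto a_t-1$, with gain $a_t-a_s+t-s-1$, to force the two target shapes counted by $\binom{k}{d}$ and $\binom{r}{d}$, and compute the $d=0$ value before adding the contributions of the $d$ extra letters. Your explicit form
$$\text{inv}(a)=\sum_{i<j}(a_i-1)a_j=\tfrac12 n^2+\tfrac12\sum_i(i-1)^2-\tfrac12\sum_i(a_i+i-1)^2$$
makes the boundary analysis you flag routine, and no run-merging step like the $M=2$, $t\ge k$ case is needed; just let the staircase take the extra letter at position $r$ (the first part equal to $2$), as your count $\binom{r}{d}$ requires.
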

\begin{proof}
We modify the proof of Theorem \ref{maxinv}.  Since the procedures applied to non-maximal members of $\mathcal{R}_n$ described in that proof did not always preserve the number of runs, special care must be taken to obtain the maximal members of $\mathcal{R}_{n,k}$.  Let $\mathcal{R}_{n,k}'=\mathcal{R}_n'\cap \mathcal{R}_{n,k}$, where $\mathcal{R}_{n}'$ is as in the proof of Theorem \ref{maxinv}, and we may restrict attention to members of $\mathcal{R}_{n,k}'$.  As before, we represent $\pi \in \mathcal{R}_{n,k}'$ by a composition $a_\pi=(a_1,\ldots,a_k)$ wherein the $i$-th part of $a_\pi$ gives the length of the $i$-th run of $\pi$.  We first show \eqref{maxinvrnke1}, noting that $2k-1 \leq n <\binom{k+1}{2}$ for a fixed $k \geq 3$ may be uniquely represented by $k$, $r$ and $d$ in this case as indicated where $r$ and $d$ satisfy the stated restrictions.  If $r=1$, then $d=0$ and $n=2k-1$, with  $\mathcal{R}_{2k-1,k}'$ seen to consist of a single permutation $\pi$ whose associated composition is $(2,\ldots,2,1)$.  Note $\text{inv}(\pi)=\sum_{i=1}^{k-1}(2i-1)=(k-1)^2$, which agrees with the $r=1,\,d=0$ case of \eqref{maxinvrnke1}.

So let us assume $k \geq 4$, with $2 \leq r \leq k-2$, for the remainder of the proof of \eqref{maxinvrnke1}. Given $\pi \in \mathcal{R}_{n,k}'$ with $a_\pi=(a_1,\ldots,a_k)$, let $u_\pi=(u_1,\ldots,u_k)$ wherein $u_i=a_i-2$ for $1 \leq i < k$, with $u_k=a_k-1$, and let $v_\pi=(v_1,\ldots,v_k)$ wherein $v_i=u_i-(r-i)$ for $1 \leq i \leq r-1$, with $v_i=u_i$ for $r \leq i \leq k$.  Let $\mathcal{R}^{(k)}$ denote the subset of $\mathcal{R}_{n,k}'$ consisting of those members such that $v_i=0$ or $1$ for $1 \leq i \leq r$, with $v_{r+1}=\cdots=v_k=0$, and let $\mathcal{T}^{(k)}=\mathcal{R}_{n,k}'\backslash \mathcal{R}^{(k)}$.  We show that no member of $\mathcal{T}^{(k)}$ can achieve the maximum inv value of $m_{n,k}$. Let $\pi \in \mathcal{T}^{(k)}$ and first suppose $u_j \geq 1$ for some $j \in [r+1,k]$ within $u_\pi$.  Note we must have $v_i \leq 0$ for some $i \in [r-1]$, for otherwise
$\sum_{i=1}^ka_i \geq 2k-1+\binom{r}{2}+r>n$, which is impossible.  We then replace $a_j$ with $a_j-1$ and $a_i$ with $a_i+1$ within $a_\pi$ and consider the resulting member $\pi^* \in \mathcal{R}_{n,k}'$.

Proceeding as before, we have $\text{inv}(\pi^*)=\text{inv}(\pi)+a_j-a_i+j-i-1$.  Note $v_i \leq 0$ implies $a_i \leq r-i+2$, and hence
$$a_j-a_i+j-i-1 \geq a_j-(r-i+2)+j-i-1=a_j-3+j-r \geq 1,$$
as $u_j \geq 1$ implies $a_j \geq 3$ if $r<j<k$, with $a_k \geq 2$ if $j=k \geq r+2$.  Thus, $\text{inv}(\pi^*)>\text{inv}(\pi)$ and $\pi$ is not maximal, as desired.  So assume $u_j=0$ for each $j \in [r+1,k]$ within $u_\pi$.  Let $M=\max(v_i)_{1 \leq i \leq r}$ and $m=\min(v_i)_{1 \leq i \leq r}$.   Upon considering separately the cases (i) $M \geq 2$ and (ii) $M=1$ with $m<0$, one can show $\text{inv}(\pi)<m_{n,k}$ for the remaining $\pi \in \mathcal{T}^{(k)}$, by applying an argument comparable to the one used at a similar juncture in the proof of Theorem \ref{maxinv}.  Note $M \geq 2$ in (i) implies $m \leq 0$, for otherwise the sum of the components of $a_\pi$ would exceed $n$.

One may verify that all members of $\mathcal{R}^{(k)}$ have equal inv value, with $|\mathcal{R}^{(k)}|=\binom{r}{d}$.  We first compute $\text{inv}(\sigma)$ when $d=0$, where $\sigma$ denotes the sole member of $\mathcal{R}^{(k)}$ in this case.  Note $\sigma \in \mathcal{R}^{(k)}$ together with  $n=2k-1+\binom{r}{2}$  implies $u_\sigma=(r-1,r-2\ldots,1,0,\ldots,0)$.  Upon considering separately the inversions involving the second elements in each of the runs of $\sigma$, we have
\begin{align*}
\text{inv}(\sigma)&=\sum_{i=1}^{k-1}(2i-1)+\sum_{i=0}^{r-3}(r-i)\binom{r-i-1}{2}+\sum_{i=1}^{r-1}(r-i)(2(k-i)-1)\\
&=(k-1)^2+\sum_{i=1}^{r-1}(i+1)\binom{i}{2}+\sum_{i=1}^{r-1}i(2(k-r+i)-1)\\
&=(k-1)^2+\frac{3}{2}\sum_{i=1}^{r-1}i^2+\frac{1}{2}\sum_{i=1}^{r-1}i^3 +(2k-2r-1)\binom{r}{2}+\binom{r}{3}\\
&=(k-1)^2+\frac{r(r-1)(2r-1)}{4}+\frac{1}{2}\binom{r}{2}^2+(2k-2r-1)\binom{r}{2}+\binom{r}{3}.
\end{align*}
This last expression may be rewritten somewhat to give
\begin{equation}\label{d=0case}
\text{inv}(\sigma)=(k-1)^2+kr(r-1)+3\binom{r+1}{4}-2\binom{r+1}{3}-\binom{r}{2},
\end{equation}
which yields the $d=0$ case of \eqref{maxinvrnke1}.

Now suppose $\tau \in \mathcal{R}^{(k)}$, where $d \in [r-1]$.  We may assume $a_\tau$ is obtained from $a_\sigma$ be adding 1 to each of the first $d$ entries of $a_\sigma$. Considering separately from the others those inversions that arise due to this addition of 1 to the first $d$ entries of $a_\sigma$, we have
\begin{align*}
\text{inv}(\tau)&=\text{inv}(\sigma)+\binom{d}{2}+\sum_{i=1}^d\binom{r-i}{2}+\sum_{i=1}^d(2k-2i-1)+\sum_{i=0}^{d-2}(r-i)(d-i-1)\\
&=\text{inv}(\sigma)+\binom{d}{2}+\binom{r}{3}-\binom{r-d}{3}+d(2k-1)-d(d+1)+\sum_{i=1}^{d-1}i(r-d+i+1)\\
&=\text{inv}(\sigma)+\binom{d}{2}+\binom{r}{3}-\binom{r-d}{3}+d(2k-1)-d(d+1)+(r-d)\binom{d}{2}+2\binom{d+1}{3}\\
&=\text{inv}(\sigma)+2dk+\binom{r}{3}-\binom{r-d}{3}+r\binom{d}{2}-\frac{d(d^2+17)}{6}.
\end{align*}
Using formula \eqref{d=0case} for $\text{inv}(\sigma)$, and combining some terms, we get
$$\text{inv}(\tau)=k^2+(r^2-r+2d-2)k+3\binom{r+1}{4}-\binom{r+1}{3}-2\binom{r}{2}-\binom{r-d}{3}+r\binom{d}{2}-\frac{d^3+17d-6}{6}.$$
Applying the fact
$$r\binom{d}{2}-\binom{r-d}{3}=d\binom{r}{2}+\binom{d+2}{3}-\binom{r}{3}-dr$$
to the last expression yields \eqref{maxinvrnke1} after some algebraic steps.

We now show \eqref{maxinvrnke2}. First note $n \geq \binom{k+1}{2}$ may be uniquely represented in terms of $k$, $r$ and $d$ as described.  Further, formula \eqref{maxinvrnke2} is seen to give the correct value of zero for all $n \geq 1$ when $k=1$, so we may assume $k \geq 2$.  Let $\pi \in \mathcal{R}_{n,k}'$, where $n=\binom{k+1}{2}+rk+d$ and $r$ and $d$ are as stated.  If $a_\pi=(a_1,\ldots,a_k)$, then define the vector $w_\pi=(w_1,\ldots,w_k)$ wherein $w_i=a_i-k-r+i-1$ for $1 \leq i \leq k$. Let $\mathcal{R}^{(k)}$ in this case denote the subset of $\mathcal{R}_{n,k}'$ consisting of those $\pi$ such that $w_\pi$ consists of only $0$'s and $1$'s and let $\mathcal{T}^{(k)}=\mathcal{R}_{n,k}'\backslash \mathcal{R}^{(k)}$.  Note $d<k$ in the representation of $n$ implies $w_\pi$ cannot be all $1$'s. Let $M=\max(w_i)_{1 \leq i \leq k}$ and $m=\min(w_i)_{1 \leq i \leq k}$, and note $M \geq 1$ with $M -m \geq 2$ for all members of $\mathcal{T}^{(k)}$.  Proceeding as before considering $M$ and $m$, one can show that no member of $\mathcal{T}^{(k)}$ can achieve the maximum inv value on $\mathcal{R}_{n,k}$.  Further, it is seen that all members of $\mathcal{R}^{(k)}$ achieve the same (maximum) inv value, with $|\mathcal{R}^{(k)}|=\binom{k}{d}$.

Let $\sigma$ denote the sole member of $\mathcal{R}^{(k)}$, where $n$ is of the stated form with $d=0$, and let $\tau$ be a member of $\mathcal{R}^{(k)}$, where $d \in [k-1]$ in the representation of $n$.  Note $a_\sigma=(k+r,k+r-1,\ldots,r+1)$ and we may assume $\tau$ arises by adding $1$ to each of the first $d$ entries of $a_\sigma$ and considering the resulting permutation.  We first compute $\text{inv}(\sigma)$.  Upon considering the number of inversions caused by the letters in the $(k-j-1)$-st run of $\sigma$ for $0 \leq j \leq k-2$, we have
\begin{align*}
\text{inv}(\sigma)&=\sum_{j=0}^{k-2}(j+r+1)\sum_{i=r+1}^{j+r+1}i=\sum_{j=0}^{k-2}(j+r+1)\left(\binom{j+r+2}{2}-\binom{r+1}{2}\right)\\
&=\sum_{j=0}^{k-2}(j+r+1)\binom{j+r+2}{2}-\binom{r+1}{2}\sum_{j=0}^{k-2}(j+r+1)\\
&=3\sum_{j=0}^{k-2}\binom{j+r+3}{3}-2\sum_{j=0}^{k-2}\binom{j+r+2}{2}-\left(r(k-1)+\binom{k}{2}\right)\binom{r+1}{2}\\
&=3\binom{k+r+2}{4}-3\binom{r+3}{4}-2\binom{k+r+1}{3}+2\binom{r+2}{3}-\left(r(k-1)+\binom{k}{2}\right)\binom{r+1}{2}\\
&=\binom{k+r+2}{4}+2\binom{k+r+1}{4}-\binom{r+3}{4}-2\binom{r+2}{4}-\left(r(k-1)+\binom{k}{2}\right)\binom{r+1}{2}.
\end{align*}
Using the fact $\binom{m}{4}+2\binom{m-1}{4}=\frac{3m-8}{4}\binom{m-1}{3}$ for $m \geq 3$ in the last expression implies
\begin{align}
\text{inv}(\sigma)&=\frac{3k+3r-2}{4}\binom{k+r+1}{3}-\frac{3r+1}{4}\binom{r+2}{3}-\left(r(k-1)+\binom{k}{2}\right)\binom{r+1}{2}\notag\\
&=\frac{3k+3r-2}{4}\binom{k+r+1}{3}-\frac{3r-2}{4}\binom{r+1}{3}-\left(rk+\binom{k}{2}\right)\binom{r+1}{2},\label{d=0case(b)}
\end{align}
which establishes the $d=0$ case of \eqref{maxinvrnke2}.

Considering the inversions caused by the extra elements added to the first $d$ runs of $\sigma$ in obtaining $\tau$, we have
$$\text{inv}(\tau)=\text{inv}(\sigma)+\binom{d}{2}+\sum_{j=1}^d\sum_{i=r+1}^{k+r-j}i+\sum_{i=0}^{d-2}(d-i-1)(k+r-i-1).$$
Note
$$\sum_{j=1}^d\sum_{i=r+1}^{k+r-j}i=\sum_{j=1}^d\left(\binom{k+r-j+1}{2}-\binom{r+1}{2}\right)=\binom{k+r+1}{3}-\binom{k+r-d+1}{3}-d\binom{r+1}{2}$$
and
$$\sum_{i=0}^{d-2}(d-i-1)(k+r-i-1)=(k+r-d)\binom{d}{2}+\frac{d(d-1)(2d-1)}{6}=(k+r)\binom{d}{2}-\binom{d+1}{3},$$
which implies
$$\text{inv}(\tau)=\text{inv}(\sigma)+\binom{k+r+1}{3}-\binom{k+r-d+1}{3}-d\binom{r+1}{2}+(k+r)\binom{d}{2}-\binom{d}{3}.$$
Formula \eqref{maxinvrnke2} now follows from \eqref{d=0case(b)}, which completes the proof.
\end{proof}

The proof of Theorem \ref{maxinv} shows that if $n=\binom{k}{2}+d$ for some $k \geq 2$ and $0 \leq d \leq k-1$, then the maximum number of inversions on $\mathcal{R}_{n}$ is achieved by some $\pi \in \mathcal{R}_{n,k-1}$ or $\mathcal{R}_{n,k}$.  In the first case, if $0 \leq d \leq k-2$, then there is agreement between formula \eqref{maxinve1} for $u_n$ and the special case of \eqref{maxinvrnke2} wherein $r=0$ and $k$ is replaced by $k-1$, as expected.  Similarly, if $d=k-1$, then the $r=1,\,d=0$ case of \eqref{maxinvrnke2} with $k-1$ in place of $k$ coincides with \eqref{maxinve1}. On the other hand, if the maximum inv on $\mathcal{R}_n$ is achieved by some $\pi \in\mathcal{R}_{n,k}$, then the proof of Theorem \ref{maxinv} shows $d \geq 2$ in this case.  Then the special case of formula \eqref{maxinvrnke1} with $r=k-2$ and $d$ replaced by $d-2$ is seen to coincide with \eqref{maxinve1}, as expected.  Further, the number of members of $\mathcal{R}_n$ for which the maximum inv is achieved is seen to arise from the corresponding numbers of such members of $\mathcal{R}_{n,k-1}$ or $\mathcal{R}_{n,k}$ in these cases.

Note that the $m_{n,k}$ values for the various $k$ are difficult to compare, as  the representations of $n$ used in Theorem \ref{maxinvrnk} strongly depend upon $k$ and change, sometimes drastically, in going from $k$ to $k+1$.  As a result, it seems unlikely that one can obtain Theorem \ref{maxinv} simply by identifying the $k$ for which  $m_{n,k}$ is greatest for a fixed $n$ using the expressions in Theorem \ref{maxinvrnk}.  Further, the argument given above in Corollary \ref{unimod} is needed to deduce the unimodality of the $m_{n,k}$ due to the complexity of these expressions.  Finally, we note that the proof of Theorem \ref{maxinv} shows that the maximum number $u_n$ of inversions on $\mathcal{R}_n$ is always achieved by a member of $\mathcal{R}_{n,m}$, where $m$ is the largest $k$ such that $\binom{k+1}{2} \leq n$.  Moreover, if $n$ is a triangular number or one greater than such a number, then this is the only $k$ for which $u_n$ is achieved on  $\mathcal{R}_{n,k}$.  For all other $n$, the maximum $u_n$ is achieved on $\mathcal{R}_{n,m+1}$ as well but nowhere else.


\footnotesize\begin{thebibliography}{20}

\bibitem{ARV}
M. Ahmia, J. L. Ram\'{\i}rez and D. Villamizar, Inversions in colored permutations, derangements and involutions, \emph{Adv. in Appl. Math.} {\bf 173} (2026), \#102999.

\bibitem{ANab}
P. Alexandersson and O. Nabawanda, Peaks are preserved under run-sorting, \emph{Enum. Combin. Appl.} {\bf 2}:1 (2022), Art. \#S2R2.

\bibitem{BHR}
J.-L. Baril, P. E. Harris and J. L. Ram\'{\i}rez, Flattened Catalan words, \emph{Bull. Inst. Combin. Appl.} {\bf 104} (2025), 51--79.

\bibitem{Bev}
A. Beveridge, K. Heysse and P. Robertson, \underline{32}1-avoiding permutations with maximum inversion number, arXiv:2607.18417v1, pre-print, 2026.

\bibitem{BEF}
A. Buck, J. Elder, A. A. Figueroa, P. E. Harris, K. J. Harry and A. Simpson, Flattened Stirling permutations, \emph{Integers} {\bf 24} (2024), \#A104.

\bibitem{Callan}
D. Callan, Pattern avoidance in ``flattened'' partitions, \emph{Discrete Math.} \textbf{309} (2009), 4187--4191.

\bibitem{EHM}
J. Elder, P. E. Harris, Z. Markman, I. Tahir and A. Verga, On flattened parking functions, \emph{J. Integer Seq.} {\bf 26} (2023), Art. 23.5.8.

\bibitem{FlS}
P. Flajolet and R. Sedgewick, \emph{Analytic Combinatorics}, Cambridge University Press, Cambridge, UK, 2009.

\bibitem{FH}
J. F\"{u}rlinger and J. Hofbauer, $q$-Catalan numbers, \emph{J. Combin. Theory Ser. A} {\bf 40} (1985), 248--264.

\bibitem{Ges}
I. M. Gessel, A $q$-analog of the exponential formula, \emph{Discrete Math.} {\bf 40} (1982), 69--80.

\bibitem{GR}
J. Goldman and G.-C. Rota, The number of subspaces of a vector space, \emph{Recent Progress in Combinatorics}, Academic Press, New York (1969), 75--83.

\bibitem{RJ}
R. Jakimczuk, Integer sequences, functions of slow increase, and the Bell numbers, \emph{J. Integer Seq.} {\bf 14} (2011), Art. 11.5.8.

\bibitem{Lo}
L. Lov\'{a}sz, \emph{Combinatorial Problems and Exercises}, Second Edition, AMS Chelsea Publishing, Providence, RI, 2007.

\bibitem{SMa1}
S.-M. Ma, An explicit formula for the number of permutations with a given number of alternating runs, \emph{J. Combin. Theory Ser. A} {\bf 119}:8 (2012), 1660--1664.

\bibitem{SMa2}
S.-M. Ma, Enumeration of permutations by number of alternating runs, \emph{Discrete Math.} {\bf 313}:18 (2013), 1816--1822.

\bibitem{Mac}
P. A. MacMahon, \emph{Combinatory Analysis}, Cambridge University Press, Cambridge, UK, 1915-16.

\bibitem{MSh}
T. Mansour and M. Shattuck, Counting subword patterns in permutations arising as flattened partitions of sets,
  \emph{Appl. Anal. Discrete Math.} {\bf 16}:1 (2022), 146--177.

\bibitem{Mar}
B. H. Margolius, Permutations with inversions, \emph{J. Integer Seq.} {\bf 4} (2001), Art. 01.2.4.

\bibitem{Muir}
T. Muir, On a simple term of a determinant, \emph{Proc. Roy. Soc. Edinburgh} {\bf 21} (1898-9), 441--477.

\bibitem{NR}
O. Nabawanda and F. Rakotondrajao, Sets of flattened partitions with forbidden patterns, \emph{J. Integer Seq.} {\bf 24} (2021), Art. 21.1.5.

\bibitem{NRB}
O. Nabawanda, F. Rakotondrajao and A. S. Bamunoba, Run distribution over flattened partitons, \emph{J. Integer Seq.} {\bf 23} (2020), Art. 20.9.6.

\bibitem{QP}
Q. Pan, Inversion-descent enumerators of \underline{32}1-avoiding permutations, arXiv:2608.30266v1, pre-print, 2026.


\bibitem{RUC}
V. R. Rao Uppuluri and J. A. Carpenter, Numbers generated by the function $\exp(1-e^x)$, \emph{Fibonacci Quart.} {\bf 7}(4) (1969), 437--448.

\bibitem{Sl}
N. J. A. Sloane et al., \emph{The On-Line Encyclopedia of Integer Sequences}, published electronically at http://oeis.org, 2026.

\bibitem{Stan}
R. P. Stanley, {\em Enumerative Combinatorics, Vol. I}, Cambridge University Press, Cambridge, UK, 1997.

\bibitem{Wag}
C. G. Wagner, \emph{A First Course in Enumerative Combinatorics}, American Mathematical Society, Providence, RI, 2020.

\bibitem{Ya}
Y. Yang, On a multiplicative partition function, \emph{Electron. J. Combin.} {\bf 8}:1 (2001), \#R19.

\bibitem{Zh}
Y. Zhuang, Counting permutations by runs, \emph{J. Combin. Theory Ser. A} {\bf 142} (2016), 147--176.


\end{thebibliography}
\end{document}